\numberwithin{equation}{section}
\newtheorem{lemma}{LEMMA}[section]
\newtheorem{thm}[lemma]{THEOREM}
\newtheorem*{corollary}{COROLLARY}
\newtheorem*{corollary1}{COROLLARY 1}
\newtheorem*{corollary2}{COROLLARY 2}
\newtheorem*{proposition}{PROPOSITION 1}
\theoremstyle{definition}
\newtheorem*{rmk}{Remark}
\newtheorem*{remark}{Remark}
\newtheorem*{defn}{Definition}
\long\def\symbolfootnote[#1]#2{\begingroup%
\def\thefootnote{\fnsymbol{footnote}}\footnote[#1]{#2}\endgroup}
\begin{document}
\newcommand{\ul}{\underline}
\newcommand{\be}{\begin{equation}}
\newcommand{\ee}{\end{equation}}
\newcommand{\ben}{\begin{enumerate}}
\newcommand{\een}{\end{enumerate}}

\long\def\symbolfootnote[#1]#2{\begingroup%
\def\thefootnote{\fnsymbol{footnote}}\footnote[#1]{#2}\endgroup}
\baselineskip = 16pt
\noindent

\title{A trio of Bernoulli relations, their implications for the Ramanujan polynomials and the zeta constants. }
\date{}

\author{M. C. Lettington (Cardiff)}
\maketitle
\begin{abstract}
We study the interplay between recurrences for zeta related functions at integer values, `Minor Corner Lattice' Toeplitz determinants and integer composition based sums. Our investigations touch on functional identities due to Ramanujan and Grosswald, the transcendence of the zeta function at odd integer values, the Li Criterion for the Riemann Hypothesis and pseudo-characteristic polynomials for zeta related functions. We begin with a recent result for $\zeta(2s)$ and some seemingly new Bernoulli relations, which we use to obtain a generalised Ramanujan polynomial and properties thereof.
\end{abstract}

\section{Introduction}
\symbolfootnote[0]{I would like to thank Professor M N Huxley for all his invaluable support
and guidance in this problem and Professor M W Coffey for his perceptive appraisal.\newline
2010 \emph{Mathematics Subject Classification}: 11B68, 11C20, 11J81, 11S05.\newline
\emph{Key words and phrases}: Ramanujan Polynomials, Bernoulli Relations, Zeta Constants.}
Let $B_0 =1$ and define the $s$-th Bernoulli number, $B_s$, and the $s$-th Bernoulli polynomial, $B_s(x)$, in the usual fashion \cite{concrete},
\cite{conway},
so that
\be
B_s=-\frac{1}{s+1}\sum_{k=0}^{s-1}\binom{s+1}{k}B_k,\qquad
B_s(x)=\sum_{k=0}^s\binom{s}{k}B_{s-k}x^k.
\label{eq:n01}
\ee
In recent papers \cite{mcl1}, \cite{mcl2} we showed that the Bernoulli numbers satisfy the recurrence relation
\be
2^{2s-1}B_{2s}=
\frac{s}{2s+1}-\frac{1}{2s+1}\sum_{k=1}^{s-1}\binom{2s+1}{2k}2^{2k-1}B_{2k},
\label{eq:n0}
\ee
and we applied the well-known Bernoulli-zeta even integer identity \cite{borwein}
\be
\zeta(2s)=\frac{(-1)^{s+1}2^{2s-1}\pi^{2s}B_{2s}}{(2s)!},
\label{eq:n001}
\ee
to yield the result
\be
\zeta(2s)
=(-1)^{s-1}\left (\frac{\pi^{2s}s}{(2s+1)!}+\sum_{k=1}^{s-1}\frac{(-1)^{s-k}\pi^{2k}}{(2k+1)!}\zeta{(2s-2k)}\right ).
\label{eq:1}
\ee
Our results are motivated partly by the relations of the title (given in Lemma 2.1) and partly by the connection with results obtained by Murty et al. concerning Ramanujan polynomials \cite{murty1}, \cite{gun}.

The odd-indexed Ramanujan polynomials are defined by
\be
R_{2s+1}(z)=\sum_{k=0}^{s+1}\frac{B_{2k}B_{2s+2-2k}}{(2k)!(2s+2-2k)!}z^{2k}.
\label{eq:R1}
\ee
They satisfy the functional (reciprocal polynomial) equation
\be
R_{2s+1}(z)=z^{2s+2}R_{2s+1}\left (\frac{1}{z}\right ),
\label{eq:R2}
\ee
and occur in Ramanujan's renowned identity involving the odd zeta constants
\[
\alpha^{-s}\left (\frac{1}{2}\zeta(2s+1)+\sum_{n=1}^{\infty}\frac{n^{-(2s+1)}}{e^{2\alpha n}-1}\right )=
\]
\be
\frac{1}{\beta^{s}}\left (\frac{1}{2}\zeta(2s+1)+\sum_{n=1}^{\infty}\frac{n^{-(2s+1)}}{e^{2\beta n}-1}\right )
-2^{2s}\sum_{k=0}^{s+1}(-1)^k\frac{B_{2k}B_{2s+2-2k}}{(2k)!(2s+2-2k)!}\alpha^{s+1-k}\beta^k,
\label{eq:R3}
\ee
where $\alpha, \beta >0$ and $\alpha\beta = \pi^2$. By definition $(\ref{eq:R1})$, the sum involving Bernoulli numbers in $(\ref{eq:R3})$ therefore
corresponds to the Ramanujan polynomial
\[
\alpha^{s+1}R_{2s+1}\left ( i\sqrt{\frac{\beta}{\alpha}}\right )=\alpha^{s+1}R_{2s+1}\left ( i\frac{\beta}{\pi}\right ).
\]
The following definition enables us to generalise the Ramanujan polynomials to include the even-indexed values $R_{2s}(z)$.
\begin{defn}
Let $B^*_s$ and $B^\prime_s$ be defined for $s\geq 2$ and $s\geq1$ respectively by the recurrences
\be
B^*_{s}=-\frac{1}{s+1}\sum_{k=0}^{s-1}\binom{s+1}{k}2^{k-s}B_k,\qquad
B^\prime_{s}=-\frac{1}{s+1}\sum_{k=0}^{s-1}\binom{s+1}{k}2^{-s}B_k,
\label{eq:n16}
\ee
with initial values $B^*_0=B^\prime_0=1$ and $B^*_1=\frac{1}{4}$.

For $r\geq0$, we define the \emph{generalised Ramanujan polynomial} $Q_r(z)$ such that
\be
Q_r(z)=\sum_{k=0}^{\left [(r+1)/2\right ]}\frac{B_{r+1-2k}^* B_{2k}^*}{(r+1-2k)!(2k)!}z^{2k}.
\label{eq:n165}
\ee
\end{defn}
\begin{thm}
With the definition of $Q_r(z)$ in $(\ref{eq:n165})$, then for $r=2s+1$ we have
\be
Q_{2s+1}(z)=R_{2s+1}(z).
\label{eq:R13}
\ee
When $r=2s$ is even we have
\be
Q_{2s}(z)=4z^{2s+2}\left (R_{2s+1}\left (\frac{1}{z}\right )-
R_{2s+1}\left (\frac{1}{2z}\right )\right )
=4\left (R_{2s+1}(z)-\frac{1}{2^{2s+2}}R_{2s+1}(2z)\right ),
\label{eq:R14}
\ee
and defining $R_{2s}\left (z\right )=Q_{2s}\left (z\right )$ we deduce the two-term reciprocal relation
\be
R_{2s}\left (z\right )-R_{2s}\left (\frac{z}{2}\right )=z^{2s+2}\left (R_{2s}\left (\frac{1}{z}\right )-R_{2s}\left (\frac{1}{2z}\right )\right ).
\label{eq:R145}
\ee
Hence we can maintain the notation developed by Murty et al. and speak of the even-indexed Ramanujan polynomials, $R_{2s}\left (z\right )$, as well as the odd-indexed Ramanujan polynomials $R_{2s+1}\left (z\right )$.
\end{thm}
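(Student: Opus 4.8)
The plan is to reduce everything to explicit closed forms for the modified numbers $B^*_s$ in terms of the ordinary Bernoulli numbers, and then to read off the three assertions as essentially formal consequences, together with the reciprocal equation $(\ref{eq:R2})$. The first task is to evaluate the defining sum in $(\ref{eq:n16})$. Writing the Bernoulli polynomial as $B_{s+1}(x)=\sum_{k=0}^{s+1}\binom{s+1}{k}B_k x^{s+1-k}$ and specialising to $x=\tfrac12$ gives $\sum_{k=0}^{s+1}\binom{s+1}{k}2^{k-s}B_k = 2B_{s+1}(\tfrac12)=2(2^{-s}-1)B_{s+1}$, where I invoke the classical evaluation $B_{n}(\tfrac12)=(2^{1-n}-1)B_n$ coming from the duplication formula for Bernoulli polynomials. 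Stripping off the two top terms $k=s$ and $k=s+1$ from this identity converts it into an expression for $B^*_s$, namely $B^*_s=B_s+\frac{4-2^{1-s}}{s+1}B_{s+1}$ for $s\geq2$. Since the odd Bernoulli numbers $B_{2m+1}$ vanish for $m\geq1$, this yields the two clean families $B^*_{2m}=B_{2m}$ for all $m\geq0$, and $B^*_{2j+1}=\frac{4-2^{-2j}}{2j+2}B_{2j+2}$ for $j\geq1$; a direct check shows that the prescribed initial value $B^*_1=\tfrac14$ is exactly the $j=0$ instance of the latter formula, so both families hold for all indices.

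The identity $(\ref{eq:R13})$ is then immediate: in $Q_{2s+1}(z)$ every factor $B^*_{2k}$ and $B^*_{2s+2-2k}$ carries an even index, so $B^*=B$ throughout and the sum $(\ref{eq:n165})$ coincides term by term with $(\ref{eq:R1})$. For $(\ref{eq:R14})$ the odd case is where the computation lives. In $Q_{2s}(z)$ the factor $B^*_{2s+1-2k}$ has odd index (while $B^*_{2k}=B_{2k}$ is even), so I substitute the odd-index formula and absorb the denominator using $\frac{1}{(2s+1-2k)!\,(2s+2-2k)}=\frac{1}{(2s+2-2k)!}$. This recasts $Q_{2s}(z)$ as $\sum_{k=0}^{s}(4-2^{2k-2s})\frac{B_{2k}B_{2s+2-2k}}{(2k)!\,(2s+2-2k)!}z^{2k}$. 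The decisive observation is that the coefficient $4-2^{2k-2s}$ vanishes at $k=s+1$, so the missing top term may be reinstated for free and the sum split as $Q_{2s}(z)=4R_{2s+1}(z)-2^{-2s}R_{2s+1}(2z)$, which is the right-hand equality of $(\ref{eq:R14})$ once $2^{-2s}=4\cdot2^{-(2s+2)}$. Applying $(\ref{eq:R2})$ to each summand, in the forms $R_{2s+1}(z)=z^{2s+2}R_{2s+1}(1/z)$ and $R_{2s+1}(2z)=(2z)^{2s+2}R_{2s+1}(1/(2z))$, converts this into the left-hand equality $4z^{2s+2}\bigl(R_{2s+1}(1/z)-R_{2s+1}(1/(2z))\bigr)$.

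Finally, to establish the two-term reciprocal relation $(\ref{eq:R145})$ I set $g(z)=R_{2s+1}(z)$, so that $R_{2s}(z)=4g(z)-2^{-2s}g(2z)$, and expand both sides of $(\ref{eq:R145})$ in terms of $g$. The left-hand side $R_{2s}(z)-R_{2s}(z/2)$ becomes $(4+2^{-2s})g(z)-4g(z/2)-2^{-2s}g(2z)$. For the right-hand side I expand $R_{2s}(1/z)-R_{2s}(1/(2z))$, multiply by $z^{2s+2}$, and reduce each of the resulting terms $z^{2s+2}g(1/z)$, $z^{2s+2}g(2/z)$, $z^{2s+2}g(1/(2z))$ by applying $g(w)=w^{2s+2}g(1/w)$ with $w=z,\ z/2,\ 2z$ respectively; this produces precisely $(4+2^{-2s})g(z)-4g(z/2)-2^{-2s}g(2z)$, matching the left-hand side. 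Hence $(\ref{eq:R145})$ holds and the designation $R_{2s}(z)=Q_{2s}(z)$ is consistent with the reciprocal symmetry enjoyed by the odd-indexed family.

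I expect the main obstacle to be the first paragraph rather than the formal manipulations: the evaluation of the defining sum via $B_{s+1}(\tfrac12)$ and, in particular, the verification that the exceptional initial value $B^*_1=\tfrac14$ is forced to coincide with the general odd-index formula (the ordinary recurrence would instead return $-\tfrac14$, since $B_1=-\tfrac12$ does not vanish). Once the two closed forms for $B^*_s$ are in hand, the vanishing of the $k=s+1$ coefficient and the repeated use of $(\ref{eq:R2})$ make the remaining steps routine.
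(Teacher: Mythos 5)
Your proof is correct, and at the level of the theorem's own manipulations it follows the same skeleton as the paper: $Q_{2s+1}=R_{2s+1}$ because only even-indexed $B^*$ occur; substitute the odd-index Bernoulli relation into $Q_{2s}$, exploit the vanishing of the coefficient at the extreme index to extend the sum, and then use $(\ref{eq:R2})$ to pass between the two forms in $(\ref{eq:R14})$ and to verify $(\ref{eq:R145})$. The genuine difference is in how you obtain the Bernoulli input, i.e.\ the paper's Lemma 2.1(i). The paper proves $B^*_{2s}=B_{2s}$ by rearranging the recurrence $(\ref{eq:n0})$, which is imported from the author's earlier papers, and proves the odd-index relation by a separate argument from $B_{2s}\left(\tfrac12\right)=(2^{1-2s}-1)B_{2s}$. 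You instead evaluate the defining sum $(\ref{eq:n16})$ in one stroke via $B_{s+1}\left(\tfrac12\right)$, arriving at the unified closed form $B^*_s=B_s+\frac{4-2^{1-s}}{s+1}B_{s+1}$ for $s\geq 2$, from which both parities fall out simultaneously because the odd Bernoulli numbers vanish; you also check that the prescribed initial value $B^*_1=\tfrac14$ coincides with the odd-index formula at $j=0$, a point the paper handles only by fiat in its definition (as you note, the recurrence itself would return $-\tfrac14$). This buys self-containedness---no appeal to $(\ref{eq:n0})$ and hence to the earlier papers---and makes transparent why $B^*_1$ must be set to $\tfrac14$. Two further differences are cosmetic: the paper reindexes $k\mapsto s+1-k$ and so reaches the form $4z^{2s+2}\left(R_{2s+1}(1/z)-R_{2s+1}(1/(2z))\right)$ first, with the vanishing factor $2^{2k}-1$ at $k=0$, whereas you stay in the original indexing and reach $4R_{2s+1}(z)-2^{-2s}R_{2s+1}(2z)$ first, with the vanishing factor $4-2^{2k-2s}$ at $k=s+1$; and your explicit three-term expansion in $g(z)=R_{2s+1}(z)$ verifying $(\ref{eq:R145})$ fills in a computation the paper compresses into the single remark that one applies $(\ref{eq:R2})$ and $(\ref{eq:R14})$ to $P_{2s}(z)=R_{2s}(z)-R_{2s}(z/2)$.
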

\begin{corollary1} For every integer $s\geq 1$ we have
\be
R_{2s+1}(2)=R_{2s+1}(1)=-\frac{(2s+1)B_{2s+2}}{(2s+2)!},
\qquad R_{2s}(1)=\frac{-B_{2s+1}^*}{(2s)!}
\label{eq:R146}
\ee
so that
\be
R_{2s+1}\left (\frac{1}{2}\right )=\frac{1}{2^{2s+2}}R_{2s+1}(1),
\qquad R_{2s}\left (\frac{1}{2}\right )=\sum_{k=0}^{s}\frac{B_{2s+1-2k}^* B_{2k}^\prime}{(2s+1-2k)!(2k)!}=0,
\label{eq:R15}
\ee
and when $s=2s_1$ is even we have for the complex values
\be
R_{2s+1}\left (i\right )=0,\qquad R_{2s}\left (i\right )=R_{2s}\left (\frac{i}{2}\right ).
\label{eq:R16}
\ee
\end{corollary1}
\begin{corollary2}
For every integer $s\geq 1$ at least one of
\be
\zeta(4s-1),\qquad \sum_{n=1}^{\infty}\frac{1}{n^{4s-1}(e^{2\pi n}-1)}
\label{eq:R22}
\ee
is transcendental.

Similarly, for every integer $s\geq 1$ at least one of
\be
\zeta(4s+1),\qquad \sum_{n=1}^{\infty}\frac{1}{n^{4s+1}}\left (\frac{1}{e^{\pi  n }-1}-\frac{1}{2^{4s}(e^{4\pi  n }-1)}\right )
\label{eq:R23}
\ee
is transcendental.
\end{corollary2}
From the first relation in $(\ref{eq:R145})$ we obtain
\be
\sum_{k=0}^{s+1}\left (2^{2k}-1\right )\frac{B_{2s+2-2k}B_{2k}}{(2s+2-2k)!(2k)!}
=R_{2s+1}(2)-R_{2s+1}(1)=0,
\label{eq:R11}
\ee
which in turn relates to a quadratic recurrence relation for the even zeta numbers (stated in Theorem 1.3) similar to that discussed by Dilcher \cite{dilcher}. In Lemma 2.1 we derive the Bernoulli relations of the title which enables us prove Theorems 1.1 and 1.2. We mention in passing that $(\ref{eq:R11})$ implies that the odd-indexed Ramanujan polynomials have a root approaching $2$ (from above) as $s\rightarrow \infty$.

In this paper we also show that recurrence relations of the type depicted in $(\ref{eq:1})$ are closely linked to functions related to $\zeta(2s)$ as well as to the Li equivalence for the Riemann Hypothesis. These type of recurrence relations can be expressed in determinant form, and in Theorem 1.4, we give a restatement of the Li equivalence in terms of determinant properties of a square matrix.

Further results concern the existence of pseudo characteristic equations for $\zeta(2s)$ and related functions on the interval $[1,\infty)$, where the approximations are exact at the end points $s=1$ and $s=\infty$, taking approximate values in between. In music a related type of problem is encountered when considering \emph{open or natural tuning} versus \emph{equal temperament tuning}. A harmoniously acceptable but inexact solution is obtained by dividing the interval $[1,2]$, representing the octave, into twelfths, by defining the frequency ratio of two adjacent notes (an equally tempered semitone) to be $2^{1/12}$. The approximation then agrees at the end points of the octave but takes approximate values in between.

For $1/\zeta(s)$ (and again related functions thereof) we also give a pseudo characteristic equation with bounds for the accuracy of these approximations in Theorem 1.5.

We now introduce some more notation.
\begin{defn}[of functions related to $\zeta(s)$]Let
\be
\zeta(s)=\sum_{n=1}^\infty \frac{1}{n^s},\qquad
\eta(s)=\sum_{n=1}^\infty\frac{(-1)^{n-1}}{n^s}= \left(1-\frac{1}{2^{s-1}}\right)\zeta(s),
\label{eq:m1}
\ee
\be
\theta(s)=\sum_{n=0}^\infty \frac{1}{(2n+1)^s}= \left(1-\frac{1}{2^s}\right)\zeta(s),
\qquad  \phi(s)=\sum_{n=1}^\infty \frac{1}{(2n)^s}= \frac{1}{2^s}\zeta(s).
\label{eq:n1}
\ee
Then
\be
\zeta(s)=\theta(s)+\phi(s),\qquad \hbox{\rm and}\qquad \eta(s)=\theta(s)-\phi(s).
\label{eq:n3}
\ee
\end{defn}

Theorem 1.2 gives linear recurrence relations, similar to that in $(\ref{eq:1})$,
for the functions $\eta(2s)$, $\theta(2s)$ and $\phi(2s)$.
\begin{thm} We have
\be
\theta(2s)=(-1)^{s-1}\left (\frac{(2s-1)\pi^{2s}}{4(2s)!}+\sum_{k=1}^{s-1}\frac{(-1)^{s-k}\pi^{2k}}{2(2k)!}
\zeta{(2s-2k)}\right ),
\label{eq:n4}
\ee
and
\be
\phi(2s)=(-1)^{s-1}\left (\frac{(2s-1)\pi^{2s}}{4(2s+1)!}+\sum_{k=1}^{s-1}\frac{(-1)^{s-k}\pi^{2k}}{(2k+1)!2^{2s-2k}}\zeta{(2s-2k)}\right ).
\label{eq:n5}
\ee
\end{thm}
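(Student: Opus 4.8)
The plan is to exploit the fact that $\theta(2s)$ and $\phi(2s)$ are, by $(\ref{eq:n1})$, just fixed rational multiples of $\zeta(2s)$, namely $\theta(2s)=(1-2^{-2s})\zeta(2s)$ and $\phi(2s)=2^{-2s}\zeta(2s)$. Applying the Bernoulli--zeta identity $(\ref{eq:n001})$ collapses each left-hand side to a single Bernoulli monomial,
\be
\theta(2s)=\frac{(-1)^{s+1}(2^{2s}-1)\pi^{2s}B_{2s}}{2\,(2s)!},\qquad
\phi(2s)=\frac{(-1)^{s+1}\pi^{2s}B_{2s}}{2\,(2s)!}.
\label{eq:plan1}
\ee
First I would also use $(\ref{eq:n001})$ to rewrite every $\zeta(2s-2k)$ appearing on the right-hand sides of $(\ref{eq:n4})$ and $(\ref{eq:n5})$ in terms of $B_{2s-2k}$. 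The common factor $(-1)^{s-1}\pi^{2s}$ then cancels throughout (the two sign powers combine as $(-1)^{2(s-k)+1}=-1$), so that each asserted recurrence becomes a pure identity among Bernoulli numbers, independent of $\pi$ and of the transcendence of the odd zeta values.

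For $\phi(2s)$ this reduction is the clean one. After substituting and reindexing $k\mapsto s-k$, using $\tfrac{(2s+1)!}{(2k+1)!(2s-2k)!}=\binom{2s+1}{2k+1}$ together with the symmetry $\binom{2s+1}{2k+1}=\binom{2s+1}{2s-2k}$, the target $(\ref{eq:n5})$ collapses to
\be
(2s+1)B_{2s}=\frac{2s-1}{2}-\sum_{m=1}^{s-1}\binom{2s+1}{2m}B_{2m},
\qquad\hbox{\rm i.e.}\qquad
\sum_{m=0}^{s}\binom{2s+1}{2m}B_{2m}=\frac{2s+1}{2}.
\label{eq:plan2}
\ee
This is exactly the even-index part of the defining recurrence $(\ref{eq:n01})$: writing $\sum_{j=0}^{2s}\binom{2s+1}{j}B_j=0$ and isolating the only nonzero odd contribution $\binom{2s+1}{1}B_1=-\tfrac{2s+1}{2}$ gives $(\ref{eq:plan2})$ immediately. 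So $(\ref{eq:n5})$ follows from nothing more than $(\ref{eq:n01})$ and $(\ref{eq:n001})$.

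The $\theta(2s)$ case runs in parallel but leads, after the same substitution, reindexing, and the conversion $\tfrac{(2s)!}{(2k)!(2s-2k)!}=\binom{2s}{2k}$, to the companion identity
\be
(2^{2s}-1)B_{2s}=\frac{2s-1}{2}-\sum_{m=1}^{s-1}\binom{2s}{2m}2^{2m-1}B_{2m},
\qquad\hbox{\rm i.e.}\qquad
\sum_{m=0}^{s}\binom{2s}{2m}2^{2m}B_{2m}=(2-2^{2s})B_{2s}+2s.
\label{eq:plan3}
\ee
I expect this to be the main obstacle, and it is the step that genuinely needs one of the Bernoulli relations of Lemma 2.1 rather than $(\ref{eq:n01})$ or $(\ref{eq:n0})$: the top binomial index here is $2s$ (not $2s+1$) and the summand carries the weight $2^{2m}$, so neither of the earlier recurrences applies directly. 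The natural way to produce $(\ref{eq:plan3})$ is to read off the coefficient of $t^{2s}$ in $\frac{t}{\sinh t}=e^{t}\cdot\frac{2t}{e^{2t}-1}=\sum_{n\ge 0}\frac{(2-2^{2n})B_{2n}}{(2n)!}t^{2n}$, since multiplication by $e^{t}$ convolves the coefficients $\tfrac{2^{j}B_{j}}{j!}$ of $\tfrac{2t}{e^{2t}-1}$ against $\tfrac{1}{j!}$; the single surviving odd term $2B_1=-1$ produces the linear term $2s$, and separating the boundary summands $m=0$ and $m=s$ then yields the constant $\tfrac{2s-1}{2}$. The delicate bookkeeping is precisely this handling of the $B_1$ contribution and the $m=0,s$ endpoints, which is what turns the raw convolution into the stated form. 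Finally, the promised recurrence for $\eta(2s)$ requires no extra work, following at once from $\eta=\theta-\phi$ in $(\ref{eq:n3})$ by subtracting $(\ref{eq:n5})$ from $(\ref{eq:n4})$.
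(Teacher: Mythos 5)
Your proposal is correct --- I verified both reductions and both Bernoulli identities --- but it reaches the theorem by a route that differs from the paper's in direction and in how the key identity is sourced. The paper works forward from Lemma 2.1 (the ``Bernoulli trio''): for $(\ref{eq:n5})$ it uses part (ii), $B'_s=B_s/2^s$, together with the defining recurrence $(\ref{eq:n16})$ (which is just $(\ref{eq:n01})$ rescaled), and for $(\ref{eq:n4})$ it uses the second identity of part (i), $(2^{2s}-1)B_{2s}=-\sum_{k=0}^{2s-2}\binom{2s}{k}2^{k-1}B_k$, which the paper obtains from the Bernoulli polynomial evaluation $B_{2s}(\tfrac12)=(2^{1-2s}-1)B_{2s}$; in each case one isolates the lone odd term $B_1$ and converts $2^{2k}B_{2k}$ back into $\zeta(2k)$ via $(\ref{eq:n001})$. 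You instead strip out the powers of $\pi$ first, reducing each asserted recurrence to a bare Bernoulli identity: for $\phi$ the even part of $(\ref{eq:n01})$, namely $\sum_{m=0}^{s}\binom{2s+1}{2m}B_{2m}=\tfrac{2s+1}{2}$, and for $\theta$ the identity $\sum_{m=0}^{s}\binom{2s}{2m}2^{2m}B_{2m}=(2-2^{2s})B_{2s}+2s$, which you prove from the factorisation $t/\sinh t=e^t\cdot 2t/(e^{2t}-1)$. Note that your $\theta$-identity is precisely the second relation of Lemma 2.1(i) in disguise, and your generating-function derivation is the exponential-generating-function form of the paper's $B_{2s}(\tfrac12)$ evaluation, since $\sum_n B_n(\tfrac12)t^n/n!=te^{t/2}/(e^t-1)=(t/2)/\sinh(t/2)$; so the two proofs ultimately rest on the same pair of facts, packaged differently. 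What your route buys: it is independent of Lemma 2.1 and makes transparent exactly what each half needs --- $(\ref{eq:n5})$ needs only $(\ref{eq:n01})$ and $(\ref{eq:n001})$, whereas $(\ref{eq:n4})$ genuinely needs the extra half-argument/duplication input. What the paper's route buys: the $B^*$ and $B'$ relations of Lemma 2.1 are reused elsewhere (Theorem 1.1 and the polynomials $Q_r$), so the heavier lemma pays for several results at once. One caution on your closing sentence: subtracting $(\ref{eq:n5})$ from $(\ref{eq:n4})$ yields a recurrence for $\eta(2s)$ in terms of $\zeta(2s-2k)$, not the Corollary's $(\ref{eq:n11})$, which expresses $\eta(2s)$ in terms of $\eta(2s-2k)$ and requires first recasting $(\ref{eq:n4})$ and $(\ref{eq:n5})$ as the self-referential recurrences $(\ref{eq:n8})$ and $(\ref{eq:n10})$; that identity, however, lies outside the theorem as stated.
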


\begin{corollary}
We have
\be
\theta(2s)=(-1)^{s-1}\left (\frac{\pi^{2s}}{4(2s)!}+
\sum_{k=1}^{s-1}\frac{(-1)^{s-k}\pi^{2k}}{(2k+1)!}\theta{(2s-2k)}\right ),
\label{eq:n8}
\ee
\be
\phi(2s)=(-1)^{s-1}\left (\frac{(2s-1)\pi^{2s}}{4(2s+1)!}+\sum_{k=1}^{s-1}\frac{(-1)^{s-k}\pi^{2k}}{(2k+1)!}\phi{(2s-2k)}\right ),
\label{eq:n10}
\ee
and
\be
\eta(2s)=(-1)^{s-1}\left (\frac{\pi^{2s}}{2(2s+1)!}+\sum_{k=1}^{s-1}\frac{(-1)^{s-k}\pi^{2k}}{(2k+1)!}\eta{(2s-2k)}\right ).
\label{eq:n11}
\ee
\end{corollary}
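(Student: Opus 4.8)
The plan is to deduce all three recurrences from the single $\zeta$-recurrence $(\ref{eq:1})$ together with Theorem 1.2, using only the elementary linear relations of $(\ref{eq:n1})$ and $(\ref{eq:n3})$; no new Bernoulli identities are required. The structural point that makes this work is that the sums in $(\ref{eq:1})$, in $(\ref{eq:n5})$ after one rescaling, and in all three target formulas carry the \emph{same} kernel $\frac{(-1)^{s-k}\pi^{2k}}{(2k+1)!}$. Consequently the decompositions $\zeta=\theta+\phi$ and $\eta=\theta-\phi$ act termwise on the summands, and the only genuine arithmetic lies in simplifying the three closed-form leading terms.

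First I would establish $(\ref{eq:n10})$. Taking the formula for $\phi(2s)$ in Theorem 1.2, namely $(\ref{eq:n5})$, I substitute $\zeta(2s-2k)=2^{2s-2k}\phi(2s-2k)$, which is just the identity $\phi(m)=2^{-m}\zeta(m)$ of $(\ref{eq:n1})$ evaluated at $m=2s-2k$. The factor $2^{2s-2k}$ exactly cancels the $2^{-(2s-2k)}$ already sitting in the summand of $(\ref{eq:n5})$, the leading term is unchanged, and $(\ref{eq:n10})$ drops out at once.

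Next I would derive $(\ref{eq:n8})$ from the splitting $\theta(2s)=\zeta(2s)-\phi(2s)$ of $(\ref{eq:n3})$, inserting $(\ref{eq:1})$ for $\zeta(2s)$ and the just-proved $(\ref{eq:n10})$ for $\phi(2s)$. Since the two sums share the kernel $\frac{(-1)^{s-k}\pi^{2k}}{(2k+1)!}$, their difference is $\sum_{k=1}^{s-1}\frac{(-1)^{s-k}\pi^{2k}}{(2k+1)!}\bigl(\zeta(2s-2k)-\phi(2s-2k)\bigr)=\sum_{k=1}^{s-1}\frac{(-1)^{s-k}\pi^{2k}}{(2k+1)!}\theta(2s-2k)$, using $(\ref{eq:n3})$ once more. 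The sole computation is the leading term $\frac{\pi^{2s}s}{(2s+1)!}-\frac{(2s-1)\pi^{2s}}{4(2s+1)!}=\frac{\pi^{2s}}{(2s+1)!}\cdot\frac{2s+1}{4}=\frac{\pi^{2s}}{4(2s)!}$, which is precisely the constant in $(\ref{eq:n8})$.

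Finally I would obtain $(\ref{eq:n11})$ in the same spirit from $\eta(2s)=\theta(2s)-\phi(2s)$, subtracting $(\ref{eq:n10})$ from the now-established $(\ref{eq:n8})$; the shared kernel again collapses the two sums to $\sum_{k=1}^{s-1}\frac{(-1)^{s-k}\pi^{2k}}{(2k+1)!}\eta(2s-2k)$, while the leading terms combine as $\frac{\pi^{2s}}{4(2s)!}-\frac{(2s-1)\pi^{2s}}{4(2s+1)!}=\frac{\pi^{2s}}{2(2s+1)!}$. I do not expect a serious obstacle here: the argument is entirely linear bookkeeping, and the one place demanding care is verifying that the closed-form constants reduce correctly, which hinges on the small cancellations $4s-(2s-1)=2s+1$ and $(2s+1)-(2s-1)=2$ that produce the factors $\frac{1}{4(2s)!}$ and $\frac{1}{2(2s+1)!}$. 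As an internal consistency check one may confirm that the value of $\theta(2s)$ furnished by $(\ref{eq:n8})$ agrees with the independent closed form $(\ref{eq:n4})$ of Theorem 1.2, both being equal to $\theta(2s)$.
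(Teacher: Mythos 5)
Your proposal is correct and follows essentially the same route as the paper: the paper likewise passes from the $\zeta$-form $(\ref{eq:n5})$ to $(\ref{eq:n10})$ via $\phi(m)=2^{-m}\zeta(m)$, obtains $(\ref{eq:n8})$ by substituting the $\zeta$- and $\phi$-recurrences into $\theta(2s)=\zeta(2s)-\phi(2s)$, and obtains $(\ref{eq:n11})$ by substituting $(\ref{eq:n8})$ and $(\ref{eq:n10})$ into $\eta(2s)=\theta(2s)-\phi(2s)$, with exactly the leading-term cancellations you identify.
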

The recurrence relation in $(\ref{eq:1})$ was originally deduced by studying determinants \cite{mnh} and the leading coefficients of the \emph{geometric polynomials} $b_q^{(s)}$ in $m$, defined for $r\geq 0$ and $q=1,\ldots m$, by the polynomial recurrence relations
\be
b_q^{(2r+1)}= \binom{m+r-q+1}{2r+1}-\sum_{k=0}^{r-1}\frac{1}{2r-2k+1}\binom{m+r-k}{2r-2k}b_q^{(2k+1)},
\label{eq:m41}
\ee
\be
b_q^{(2r)}= -\binom{m+r-q}{2r}-\sum_{k=0}^{r-1}\frac{1}{2r-2k+1}\binom{m+r-k}{2r-2k}b_q^{(2k)},
\label{eq:m415}
\ee
and also for $b_0^{(2r)}$ with $r\geq 0$ in $(\ref{eq:m415})$ with $b_0^{(0)}=m$.
When $q=m$ in $(\ref{eq:m41})$, the leading coefficients of the polynomials then follow the Dirichlet eta function recurrence relation given in $(\ref{eq:n11})$.
\begin{defn}
Corresponding to the three infinite-dimensional vectors
\[
{\bf h}=(h_1,h_2,h_3,\ldots),\qquad {\bf H}=(H_1,H_2,H_3,\ldots),\qquad{\bf G}=(G_1,G_2,G_3,\ldots),
\]
we define
\be
\Delta_s({\bf h})=(-1)^s\left |
\begin{array}{cccccc}
h_1 & 1 & 0 & 0 &  \ldots & 0 \\
h_2 & h_1 & 1 & 0 &  \ldots & 0 \\
h_3 & h_2 & h_1 & 1 &  \ldots & 0\\
\vdots & \vdots & \vdots & \vdots &  \ddots & \vdots\\
h_{s-1} & h_{s-2} & h_{s-3} & h_{s-4} & \ldots & 1\\
h_s & h_{s-1} & h_{s-2} & h_{s-3} & \ldots & h_1\\
\end{array}
\right |,
\label{eq:d1}
\ee
\vspace{0cm}
\be
\Psi_s({\bf h},{\bf H})=(-1)^s\left |
\begin{array}{cccccc}
H_1 & 1 & 0 & 0 &  \ldots & 0 \\
H_2 & h_1 & 1 & 0 &  \ldots & 0 \\
H_3 & h_2 & h_1 & 1 &  \ldots & 0\\
\vdots & \vdots & \vdots & \vdots &  \ddots & \vdots\\
H_{s-1} & h_{s-2} & h_{s-3} & h_{s-4} & \ldots & 1\\
H_s & h_{s-1} & h_{s-2} & h_{s-3} & \ldots & h_1\\
\end{array}
\right |,
\label{eq:d11}
\ee
\vspace{0cm}
\be
\Lambda_s({\bf h},{\bf H},{\bf G})=(-1)^s\left |
\begin{array}{cccccc}
H_1 & 1 & 0 & 0 &  \ldots & 0 \\
H_2 & h_1 & 1 & 0 &  \ldots & 0 \\
H_3 & h_2 & h_1 & 1 &  \ldots & 0\\
\vdots & \vdots & \vdots & \vdots &  \ddots & \vdots\\
H_{s-1} & h_{s-2} & h_{s-3} & h_{s-4} & \ldots & 1\\
H_s & G_{s-1} & G_{s-2} & G_{s-3} & \ldots & G_1\\
\end{array}
\right |.
\label{eq:d12}
\ee

\noindent
We refer to $\Delta_s({\bf h})$ as an $s\times s$ \emph{minor corner layered determinant}, or type 1 MCL determinant for short; to $\Psi_s({\bf h},{\bf H})$ as a \emph{half-weighted} $s\times s$ MCL determinant, or type 2 MCL determinant for short, and to $\Lambda_s({\bf h},{\bf H},{\bf G})$ as a \emph{fully-weighted} $s\times s$ MCL determinant, or type 3 MCL determinant for short. Furthermore, if $H_k=G_k$ for each $k=1,\ldots , s$ then we call $\Lambda({\bf h},{\bf H},{\bf H})$ a \emph{balanced fully-weighted MCL determinant}.
\end{defn}
The closed forms for $b_q^{(t)}$ given in $(\ref{eq:m41})$ and $(\ref{eq:m415})$, were originally obtained by studying
associated magic squares under matrix multiplication \cite{mcl1}.
We will see later in Lemma 3.1 that all recurrence relations of this type can be expressed as one of the
three types of \emph{Minor Corner Layered} determinants defined above.

We can now state Theorem 1.3, which expresses both $\theta(2s+2)$ and $\zeta(2s+2)$ as a quadratic recurrence relation, an ``integer composition'' based sum and as a type one MCL determinant.
\begin{thm}
Let ${\bf p}$ and ${\bf q}$ be the two infinite-dimensional vectors defined such that
\[
{\bf p}=2(\phi(2),-\phi(4),\phi(6),-\phi(8),\ldots),
\qquad {\bf q}=2(\zeta(2),-\zeta(4),\zeta(6),-\zeta(8),\ldots).
\]
Then
\[
\theta(2s+2)=2\sum_{k=0}^{s-1}\phi(2s-2k)\theta(2k+2)=\frac{(-1)^s\pi^2}{8}\Delta_s({\bf p})
\]
\be
=\frac{\pi^2}{8}\mathop{\mathop{\sum_{t=1}^s \sum_{d_i\geq 0}}_{d_1+d_2+\ldots +d_s=t}}_{d_1+2d_2+\ldots+sd_s=s} 2^t\binom{t}{d_1,d_2,\ldots,d_s}
\phi^{d_1}(2)\phi^{d_2}(4)\ldots\phi^{d_s}(2s),
\label{eq:n36}
\ee
and
\[
\zeta(2s+2)=\frac{2}{2^{2s+2}-1}\sum_{k=0}^{s-1}(2^{2k+2}-1)\zeta(2s-2k)\zeta(2k+2)
=\frac{(-1)^s\pi^2}{2}\Delta_s({\bf q})
\]
\be
=\frac{\pi^2/2}{2^{2s+2}-1}\mathop{\mathop{\sum_{t=1}^s \sum_{d_i\geq 0}}_{d_1+d_2+\ldots +d_s=t}}_{d_1+2d_2+\ldots+sd_s=s}
2^{t}\binom{t}{d_1,d_2,\ldots,d_s} \zeta^{d_1}(2)\zeta^{d_2}(4)\ldots\zeta^{d_s}(2s).
\label{eq:n37}
\ee
\end{thm}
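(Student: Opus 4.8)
The plan is to route all three descriptions of $\theta(2s+2)$ and of $\zeta(2s+2)$ through a single pair of generating functions, using that the convolution appearing in the quadratic recurrence, the reciprocal power series attached to a type~1 MCL determinant, and the multinomial expansion giving the composition sum are three readings of the same series identity. First I would record the classical closed forms
$$T(x)=\sum_{k\ge1}\theta(2k)x^{2k}=\frac{\pi x}{4}\tan\frac{\pi x}{2},\qquad F(x)=\sum_{k\ge1}\zeta(2k)x^{2k}=\frac{1-\pi x\cot\pi x}{2},$$
valid near $x=0$ and following from the partial-fraction expansions of $\tan$ and $\cot$ (equivalently from $(\ref{eq:n001})$), together with $P(x)=\sum_{k\ge1}\phi(2k)x^{2k}=F(x/2)$ by $(\ref{eq:n1})$.

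For the two quadratic recurrences I would write down the functional identities
$$T(x)\left(1-2P(x)\right)=\frac{\pi^2x^2}{8},\qquad T(2x)\left(1-2F(x)\right)=\frac{\pi^2x^2}{2}.$$
Both are immediate, since $1-2P(x)=\tfrac{\pi x}{2}\cot\tfrac{\pi x}{2}$ and $1-2F(x)=\pi x\cot\pi x$, so each product telescopes through $\tan\cdot\cot=1$, the constant $\tfrac{\pi^2}{8}$ being $\theta(2)$. Comparing the coefficient of $x^{2s+2}$, where the right-hand sides vanish for $s\ge1$, gives $\theta(2s+2)=2\sum_{k=0}^{s-1}\phi(2s-2k)\theta(2k+2)$, and, since $T(2x)=\sum_{k\ge1}(2^{2k}-1)\zeta(2k)x^{2k}$, the weighted convolution $(2^{2s+2}-1)\zeta(2s+2)=2\sum_{k=0}^{s-1}(2^{2k+2}-1)\zeta(2k+2)\zeta(2s-2k)$, which are the opening lines of the two assertions.

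For the determinant forms I would use the correspondence between a type~1 MCL determinant and a reciprocal series promised in Lemma~3.1: a first-row cofactor expansion of $(\ref{eq:d1})$ yields $\Delta_s({\bf h})=-\sum_{k=1}^{s}h_k\Delta_{s-k}({\bf h})$ with $\Delta_0=1$, whence $\sum_{s\ge0}\Delta_s({\bf h})x^s=1/H(x)$ for $H(x)=1+\sum_{k\ge1}h_kx^k$. Taking ${\bf h}={\bf p}$ and ${\bf h}={\bf q}$, the substitution $x=-y^2$ turns the denominators into $1-2P(y)$ and $1-2F(y)$; the identities of the previous paragraph then give $1/H_{\bf p}=\tfrac{8}{\pi^2}T(y)/y^2$ and $1/H_{\bf q}=\tfrac{2}{\pi^2}T(2y)/y^2$, and reading off the coefficient of $y^{2s}$ (with the sign $(-1)^s$ produced by $x=-y^2$) recovers $\theta(2s+2)$ and $(2^{2s+2}-1)\zeta(2s+2)$, giving the MCL determinant expressions.

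Finally, the integer-composition sums arise from expanding the same reciprocal as a geometric series $1/H(x)=\sum_{t\ge0}(-1)^t\bigl(\sum_{k\ge1}h_kx^k\bigr)^t$ and applying the multinomial theorem; the coefficient of $x^s$ enforces exactly the constraints $\sum_i d_i=t$ and $\sum_i i\,d_i=s$, while each factor $h_k=2(-1)^{k-1}\phi(2k)$ (respectively with $\zeta$) contributes the $2^t$ and collapses all signs into one $(-1)^s$ that cancels the determinant normalisation, leaving the stated sums. I expect the main obstacle to be the bookkeeping of the substitution $x=-y^2$ together with the powers of two --- in particular keeping track of the weight $2^{2k}-1$ that distinguishes the $\zeta$ case from the $\theta$ case and fixes the constant multiplying each determinant --- rather than any single hard estimate; establishing the determinant--reciprocal correspondence, if Lemma~3.1 is not simply quoted, is the only structural step.
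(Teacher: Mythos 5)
Your proposal is correct and is essentially the paper's own argument: the paper likewise routes everything through the reciprocal generating-function identity of part (iii) of Lemma 2.1 (the hyperbolic form of your $\tan\cdot\cot=1$ identities, which after rescaling the variable is exactly your $T(y)\left(1-2P(y)\right)=\pi^2y^2/8$), expands that reciprocal as a geometric series to obtain the composition sums, and then invokes Lemmas 3.1 and 3.2 for the determinant and recurrence forms. The only organisational difference is that you treat the $\zeta$ case by a second, parallel functional identity for $T(2x)$, whereas the paper obtains $(\ref{eq:n37})$ by rearranging the double sum of $(\ref{eq:n36})$ via $\theta(2k)=(1-2^{-2k})\zeta(2k)$, $\phi(2k)=2^{-2k}\zeta(2k)$; both routes are sound. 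One detail of your derivation is worth keeping: it yields $(2^{2s+2}-1)\,\zeta(2s+2)=\frac{(-1)^s\pi^2}{2}\Delta_s({\bf q})$, which shows that the middle member of $(\ref{eq:n37})$ as printed is missing the factor $\frac{1}{2^{2s+2}-1}$ (at $s=1$ the printed equality would assert $\zeta(4)=\pi^2\zeta(2)=\pi^4/6$ rather than $\pi^4/90$); your constant is the one consistent with the composition-sum member of $(\ref{eq:n37})$ and with the quadratic recurrence.
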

\begin{remark}
Similar sums have been considered by Dilcher \cite{dilcher}, where for $N\geq 1$, he defines
the $S_N(n)$ such that
\be
S_N(n)=\sum_{d_1+d_2+\ldots +d_s=n\phantom{1}} \sum_{d_i\geq 0}
\binom{2n}{2d_1,2d_2,\ldots,2d_N} B_{2d_1}B_{2d_2}\ldots B_{2d_N},
\label{eq:Dil1}
\ee
and the sequence $r_k^{(N)}$ of rational numbers recursively by $r_0^{(N)}=1$,
\[
r_k^{(N+1)}=\frac{-1}{N}r_k^{(N)}+\frac{1}{4}r_{k-1}^{(N-1)},
\]
with $r_k^{(N)}=0$ for $k<0$. For $2n>N$, Dilcher then shows that
\begin{eqnarray*}
S_N(n)&=&\frac{(2n)!}{(2n-N)!}\sum_{k=0}^{(N-1)/2}r_k^{(N)}\frac{B_{2n-2k}}{2n-2k}\\
&=& \sum_{d_1+d_2+\ldots +d_s=n\phantom{1}} \sum_{d_i\geq 0} \zeta(2d_1)\zeta(2d_2)\ldots\zeta(2d_N);
\label{eq:Dil2}
\end{eqnarray*}
note that Dilcher's sum includes the non-elementary value $\zeta(0)=-\frac{1}{2}$ (see Titchmarsh \cite{titchmarsh}, equation $(2.4.3)$).

\end{remark}
\vspace{5 mm}
In Lemma 3.3 we show that the linear recurrence relations already stated for $\zeta(2s)$, $\eta(2s)$, $\theta(2s)$ and $\phi(2s)$,
in $(\ref{eq:1})$, $(\ref{eq:n8})$, $(\ref{eq:n10})$ and $(\ref{eq:n11})$ can also be expressed as MCL determinants and as ``integer composition'' based sums.

This seemingly fundamental link between the even zeta based constants, closed form recurrence relations, ``integer composition'' based sums and MCL determinants also extends to the \emph{Li equivalence} for the Riemann Hypothesis.

The Li equivalence relies on the non-negativity of a sequence of real numbers $\{\lambda_n\}_{n=1}^\infty$ determined from the Riemann xi function as follows. Let
\[
\xi(s)=s(s-1)\pi^{-s/2}\Gamma\left (\frac{s}{2}\right )\zeta(s),
\]
\[
\lambda_n=\frac{1}{(n-1)!}\frac{d^n}{ds^n}[s^{n-1}\log{\xi(s)}]_{s=1},
\]
and
\[
\varphi(z)=\xi\left (\frac{1}{1-z}\right )=1+\sum_{j=1}^\infty a_j z^j,
\]
for $|z|<1/4$. Then $\xi(s)$ satisfies the functional equation $\xi(s)=\xi(1-s)$.
By expressing $\lambda_n$ as a sum
over the non-trivial zeros of $\zeta(s)$ and utilising Jacobi theta functions, Li \cite{Li} shows that $a_j$ is a positive real number for every positive integer $j$; that
\be
\lambda_n=\sum_{t=1}^s\frac{(-1)^{t-1}}{t} \mathop{\mathop{\sum }_{1\leq k_1,\ldots,k_t\leq n}}_{k_1+k_2+\ldots +k_t=n} a_{k_1}\ldots a_{k_t},
\label{eq:li1}
\ee
and that the recurrence relation
\be
\lambda_n=n a_n-\sum_{j=1}^{n-1}\lambda_j a_{n-j}
\label{eq:li2}
\ee
holds for every positive integer $n$.

\begin{proposition}[Li Criterion]
A necessary and sufficient condition for the nontrivial zeros of the Riemann zeta function $\zeta(s)$ to lie on the
critical line is that $\lambda_n$ is non-negative for every positive integer $n$.
\end{proposition}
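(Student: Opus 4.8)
The plan is to reduce the criterion to the geometry of the Möbius transformation $z=1-1/s$, after first re-expressing $\lambda_n$ as a sum over the nontrivial zeros. Since $\xi$ is entire of order one with zeros precisely at the nontrivial zeros $\rho$ of $\zeta$, the Hadamard factorisation gives $\xi(s)=\xi(0)\prod_\rho(1-s/\rho)$, the product taken symmetrically (pairing $\rho$ with $1-\rho$), whence $\frac{d}{ds}\log\xi(s)=\sum_\rho(s-\rho)^{-1}$. Substituting this into the defining formula $\lambda_n=\frac{1}{(n-1)!}\frac{d^n}{ds^n}[s^{n-1}\log\xi(s)]_{s=1}$ and evaluating the derivatives yields Li's closed form
\[
\lambda_n=\sum_\rho\left[1-\left(1-\frac{1}{\rho}\right)^n\right],
\]
the sum understood in the symmetric sense $\lim_{T\to\infty}\sum_{|\operatorname{Im}\rho|<T}$, which converges because, after pairing $\rho$ with $\bar\rho$, the summands are $O(1/|\rho|^2)$. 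I would treat this identity as the main bridge; establishing it is the first key step.

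Next I would record the mapping properties of $z_\rho:=1-1/\rho=(\rho-1)/\rho$. Since $|z_\rho|^2=|\rho-1|^2/|\rho|^2$ and $|\rho|^2-|\rho-1|^2=2\operatorname{Re}\rho-1$, the point $z_\rho$ lies inside, on, or outside the unit circle according as $\operatorname{Re}\rho$ is greater than, equal to, or less than $\tfrac12$; in particular $z_\rho\to1$ as $|\rho|\to\infty$, so only finitely many zeros have $|z_\rho|$ bounded away from $1$, and $R:=\sup_\rho|z_\rho|$ is finite and attained. Writing $\lambda_n=\sum_\rho(1-z_\rho^n)$ and using that the zeros are closed under conjugation (so conjugate pairs contribute the real quantity $2\operatorname{Re}(1-z_\rho^n)$) reduces both directions of the equivalence to the location of the $z_\rho$ relative to the unit circle.

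For necessity (RH $\Rightarrow\lambda_n\ge0$): if every $\rho$ lies on the critical line then every $z_\rho=e^{i\theta_\rho}$ lies on the unit circle, and the conjugate pair $\{\rho,\bar\rho\}$ contributes $2-z_\rho^n-\overline{z_\rho}^{\,n}=2\bigl(1-\cos n\theta_\rho\bigr)\ge0$; summing over pairs gives $\lambda_n\ge0$. For sufficiency I would argue by contraposition. If RH fails then, by the functional equation $\xi(s)=\xi(1-s)$ together with conjugation symmetry, there is a zero with $\operatorname{Re}\rho<\tfrac12$, i.e. with $|z_\rho|>1$, so $R>1$, attained at finitely many zeros occurring in conjugate pairs with arguments $\pm\theta_j$. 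Splitting off these dominant terms gives
\[
\lambda_n=-R^n\!\!\sum_{|z_\rho|=R}\!\!\cos n\theta_\rho\;+\;o(R^n),
\]
where the error absorbs the finitely many zeros with $1<|z_\rho|<R$ (contributing $O((R-\delta)^n)$ for some $\delta>0$) together with the convergent tail over $|z_\rho|\le1$, estimated via $|1-z_\rho^n|\le\min(2,\,n|\arg z_\rho|)$ and the density $N(T)\sim\frac{T}{2\pi}\log T$. It then remains to produce $n$ for which $\sum_{|z_\rho|=R}\cos n\theta_\rho\ge c>0$; for such $n$ the dominant term forces $\lambda_n\to-\infty$, contradicting non-negativity.

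The main obstacle is precisely this last step: extracting $n$ with $\sum_j\cos n\theta_j\ge c>0$. When the $\theta_j/2\pi$ are rationally independent this follows from Kronecker's (or Weyl's) equidistribution theorem, which supplies $n$ with all $n\theta_j$ simultaneously near $0$; when rational relations are present one must pass to a suitable arithmetic progression of $n$ and verify the averaged dominant sum remains positive. Handling these degenerate configurations uniformly — equivalently, bounding $\limsup_n\lambda_n/R^n$ away from $0$ — is the delicate point, and is most cleanly phrased through the Bombieri–Lagarias reformulation in terms of the power sums $\sum_\rho z_\rho^n$. The remaining ingredients (the Hadamard product and the $o(R^n)$ tail estimate) are routine once the zero-counting function is invoked.
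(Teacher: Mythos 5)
This proposition is not proved in the paper at all: it is stated as \emph{Li's} theorem, quoted from \cite{Li} together with the identities for $\lambda_n$ and $a_j$, and the paper's own contribution (Theorem 1.4) is merely a reformulation of the criterion as non-negativity of the half-weighted MCL determinants $M_n$. So the benchmark for your proposal is Li's original argument (and its sharpening by Bombieri--Lagarias), and that is essentially what you have reconstructed: the symmetric Hadamard product giving $\lambda_n=\sum_\rho\bigl[1-(1-1/\rho)^n\bigr]$, the observation that $z_\rho=1-1/\rho$ lies inside, on, or outside the unit circle according to the sign of $\Re\rho-\tfrac12$, and the resulting dichotomy. Your necessity direction (under RH each conjugate pair contributes $2(1-\cos n\theta_\rho)\ge 0$) is complete and is exactly the standard argument.

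The gap is in the sufficiency direction, and you have located it yourself: producing arbitrarily large $n$ with $\sum_j\cos n\theta_j\ge c>0$, where $\pm\theta_j$ are the arguments of the finitely many $z_\rho$ of maximal modulus $R>1$. But the difficulty you anticipate (rational relations among the $\theta_j/2\pi$ forcing a case analysis over arithmetic progressions) is illusory: you do not need Kronecker--Weyl \emph{density} of $(n\theta_1,\dots,n\theta_m)$ in the torus, only \emph{recurrence} to the origin, and that holds unconditionally by Dirichlet's simultaneous approximation theorem. For every $Q$ there is some $1\le n\le Q^m$ with $\|n\theta_j/2\pi\|\le 1/Q$ for all $j$; hence either all $\theta_j$ are rational multiples of $2\pi$ (and then every multiple of a common denominator gives $\cos n\theta_j=1$ for all $j$), or such $n$ are unbounded and satisfy $\sum_j\cos n\theta_j\ge m\cos(2\pi/Q)\ge m/2$. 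Either way $\lambda_n\le -\tfrac{m}{2}R^n+o(R^n)\to-\infty$ along a sequence, contradicting non-negativity. One further point must be kept explicit: every splitting of $\sum_\rho(1-z_\rho^n)$ has to be performed \emph{after} pairing $\rho$ with $\bar\rho$. Your unpaired tail bound $|1-z_\rho^n|\le\min(2,n|\arg z_\rho|)$ summed against the density $N(T)\sim\frac{T}{2\pi}\log T$ actually diverges (the portion $n\sum_{|\gamma|>n}|\rho|^{-1}$ is infinite); the paired bound, $O(n^2/|\rho|^2)$ for $|\rho|\ge n$ and $O(1)$ otherwise, gives a tail of size $O(n\log n)=o(R^n)$, which is what the argument needs. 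With these two repairs your sketch becomes a complete proof along Li's original lines.
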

Li obtains a corresponding equivalence for the Dedekind zeta function $\zeta_k(s)$ of an algebraic number field $k$. There is an illuminating discussion of the Li Criterion in \cite{coffey}, \cite{coffey1}.

We can reword the Li Criterion (and similarly for an algebraic number field) using half-weighted MCL determinants.
\begin{thm}
With $\lambda_j$ and $a_j$ defined as in $(\ref{eq:li1})$ and $(\ref{eq:li2})$, let
${\bf a}$ and ${\bf A}$ be the two infinite-dimensional vectors defined by
\[
{\bf a}=(a_1,a_2,a_3,a_4,\ldots),
\qquad {\bf A}=(a_1,2a_2,3a_3,4a_4,\ldots).
\]
Let $L_n$ be the $n\times n$ matrix given by
\be
L_n=\left (
\begin{array}{cccccc}
-a_1 & 1 & 0 & 0 &  \ldots & 0 \\
-2a_2 & a_1 & 1 & 0 &  \ldots & 0 \\
-3a_3 & a_2 & a_1 & 1 &  \ldots & 0\\
\vdots & \vdots & \vdots & \vdots &  \ddots & \vdots\\
-(n-1)a_{n-1} & a_{n-2} & a_{n-3} & a_{n-4} & \ldots & 1\\
-n a_n & a_{n-1} & a_{n-2} & a_{n-3} & \ldots & a_1\\
\end{array}
\right ),
\label{eq:A2}
\ee
and define $M_n$ such that $M_n=(-1)^n|L_n|$. Then
\be
M_n=\lambda_n=\Psi({\bf a},{-\bf A})=-\Psi({\bf a},{\bf A}),
\label{eq:A1}
\ee
and a necessary and sufficient condition for the nontrivial zeros of the Riemann
zeta function to lie on the critical line is that $n\times n$ half-weighted MCL determinant $M_n$,
given in $(\ref{eq:A1})$ satisfies $M_n\geq 0$ for all $n=1,2,3,\ldots$.
\end{thm}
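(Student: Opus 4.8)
The plan is to prove the three asserted equalities in $(\ref{eq:A1})$ in turn, the first two being essentially formal and the third, $M_n=\lambda_n$, carrying the real content; the stated criterion is then an immediate consequence of Proposition~1. First I would observe that the two determinant equalities follow directly from the Definition of the half-weighted MCL determinant $\Psi_s(\mathbf{h},\mathbf{H})$ in $(\ref{eq:d11})$. Setting $\mathbf{h}=\mathbf{a}$ and $\mathbf{H}=-\mathbf{A}$, the array displayed inside $\Psi_n$ is exactly $L_n$ of $(\ref{eq:A2})$: its first column becomes $(-a_1,-2a_2,\ldots,-na_n)^{T}$ and the remaining Toeplitz block carries the $a_k$ with $1$'s on the superdiagonal. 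Hence $\Psi_n(\mathbf{a},-\mathbf{A})=(-1)^n|L_n|=M_n$ by the definition $M_n=(-1)^n|L_n|$. Since $\mathbf{H}$ enters the $\Psi$-matrix only through its first column and does so linearly, replacing $\mathbf{A}$ by $-\mathbf{A}$ negates that one column and hence the determinant, giving $\Psi_n(\mathbf{a},-\mathbf{A})=-\Psi_n(\mathbf{a},\mathbf{A})$.

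For the central identity $M_n=\lambda_n$ I would show that $M_n$ obeys the same recurrence $(\ref{eq:li2})$ and the same initial value as $\lambda_n$, so that induction finishes the argument. Writing $a_0=1$, the recurrence $(\ref{eq:li2})$ is the unitriangular Toeplitz convolution $\sum_{j=1}^n a_{n-j}\lambda_j=na_n$. In generating-function form, with $A(x)=\sum_{k\ge0}a_kx^k$ and $\Lambda(x)=\sum_{j\ge1}\lambda_jx^j$, this reads $A(x)\Lambda(x)=xA'(x)$, that is $\Lambda(x)=xA'(x)/A(x)$; note this is just the coefficientwise form of Li's representation $(\ref{eq:li1})$, since $\Lambda=x(\log A)'$. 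The plan is to evaluate $|L_n|$ by cofactor expansion and recover precisely this coefficient.

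Concretely, deleting the first column of $L_n$ leaves the first $n-1$ columns of the $n\times n$ lower-unitriangular Toeplitz matrix $U_n=(a_{r-c})_{r,c=1}^{n}$, whose determinant is $1$. The minor obtained by also deleting row $i$ is then, up to the sign $(-1)^{i+n}$, the $(i,n)$ cofactor of $U_n$, which equals $(U_n^{-1})_{n,i}=c_{n-i}$, where $\sum_k c_kx^k=1/A(x)$. Collecting the signs from the first-column expansion, the weights $-ia_i$, and these cofactors yields
\be
M_n=(-1)^n|L_n|=\sum_{i=1}^{n} i\,a_i\,c_{n-i}=[x^n]\,\frac{xA'(x)}{A(x)}=\lambda_n .
\label{eq:plan}
\ee
The one place demanding care is the sign bookkeeping: one must verify that the three sign contributions combine to $(-1)^n$, cancelling the prefactor in $M_n=(-1)^n|L_n|$; I have checked the outcome against $n=1,2,3$ as a sanity test, and in general this is exactly the content of the Minor Corner Layered correspondence to be recorded in Lemma~3.1, which may be cited in place of the explicit computation.

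Finally, with $M_n=\lambda_n$ established, the criterion is immediate: the condition $M_n\ge0$ for all $n\ge1$ is identical to $\lambda_n\ge0$ for all $n\ge1$, which by the Li Criterion (Proposition~1) is necessary and sufficient for every nontrivial zero of $\zeta(s)$ to lie on the critical line. The main obstacle is therefore neither analytic nor conceptual but purely combinatorial, namely the determinant evaluation $(\ref{eq:plan})$ together with its sign accounting; everything else is definitional or a direct appeal to Proposition~1.
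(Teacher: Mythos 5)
Your proposal is correct, and its overall skeleton (identify $M_n$ with $\Psi_n(\mathbf{a},-\mathbf{A})$ by definition, prove $M_n=\lambda_n$, then quote Proposition 1) matches the paper's. The difference lies in how the central identity $M_n=\lambda_n$ is established. The paper simply substitutes $H_n=-na_n$, $h_k=a_k$ into the type-2 recurrence $(\ref{eq:d22})$ of Lemma 3.1, observes that this reproduces Li's recurrence $(\ref{eq:li2})$ verbatim, and then invokes the converse half of Lemma 3.1 (a sequence obeying a type-2 recurrence \emph{is} the corresponding half-weighted MCL determinant) to conclude $\lambda_n=\Psi_n(\mathbf{a},-\mathbf{A})=M_n$. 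You instead evaluate $|L_n|$ directly: expansion down the first column, identification of the complementary minors with entries of the inverse of the unitriangular Toeplitz matrix (i.e.\ coefficients of $1/A(x)$), and the generating-function identity $\Lambda(x)A(x)=xA'(x)$ encoding $(\ref{eq:li2})$. I checked your sign bookkeeping: the product of the expansion sign $(-1)^{i+1}$, the entry $-ia_i$, and the cofactor sign $(-1)^{i+n}$ is $(-1)^n ia_ic_{n-i}$, which cancels the prefactor in $M_n=(-1)^n|L_n|$, so your $(\ref{eq:plan})$ is right. What each approach buys: the paper's is shorter given that Lemma 3.1 is already on record (and you correctly note that citing it would replace your computation); yours is self-contained and makes the analytic meaning of the $\lambda_n$ explicit, namely $\sum_n\lambda_n x^n = xA'(x)/A(x)$. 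One small caution: your parenthetical claim that this is "the coefficientwise form of $(\ref{eq:li1})$" does not match the paper's printed $(\ref{eq:li1})$, which lacks the factor $n$ and is in fact inconsistent with $(\ref{eq:li2})$ (compare $n=2$: the two give $a_2-a_1^2/2$ versus $2a_2-a_1^2$); this is evidently a typo in the paper, and since both your argument and the paper's use only $(\ref{eq:li2})$, nothing is affected.
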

Theorem 1.5 examines some `pseudo-characteristic polynomials' that approximate $\zeta(s)$, $1/\zeta(s)$ and related functions.
\begin{defn}[of pseudo characteristic polynomials]Let
\be
p_{s}(x)=\sum_{k=1}^{s-1}\frac{(-1)^{k-1}\pi^{2k}}{(2k+1)!}x^{2k},\qquad
q_{s}(x)=\sum_{k=0}^{s-1}\frac{(-1)^{k}\pi^{2k}}{(2k+1)!}x^{2k},\qquad
\label{eq:33}
\ee
\be
z_{s}(x)=\frac{(-1)^{s-1}s\pi^{2s}}{(2s+1)!}+p_{s}(x),\qquad
t_{s}(x)=\frac{(-1)^{s-1}\pi^{2s}}{4(2s)!}+p_{s}(x),
\label{eq:33a}
\ee
\be
e_{s}(x)=\frac{(-1)^{s-1}\pi^{2s}}{2(2s+1)!}+p_{s}(x),\qquad
f_{s}(x)=\frac{(-1)^{s-1}(2s-1)\pi^{2s}}{4(2s+1)!}+p_{s}(x).
\label{eq:33b}
\ee
The polynomials above are all of a similar structure to that in $(\ref{eq:1})$.
\end{defn}

\begin{thm}
For positive integers $s$ the polynomials $z_s(x)$ and $q_s(x)$, evaluated either at $k=2s$ or $k=2s-1$, satisfy the following inequalities.

\noindent
For $s\geq 17$
\be
\zeta(k)-3\{\zeta(k)\}^2 \leq  z_{s}(\zeta(k))\leq\zeta(k).
\label{eq:65}
\ee
For $s\geq 38$
\be
\theta(k)-3\{\theta(k)\}^2 \leq t_{s}(\theta(k))\leq\theta(k).
\label{eq:66}
\ee
For $s\geq 34$
\be
\frac{1}{\zeta(k)}-\{\zeta(k)\}^3 \leq 1+q_{s}(\zeta(k)) \leq \frac{1}{\zeta(k)}+11\{\zeta(k)\}^3.
\label{eq:67}
\ee
For $s\geq 114$
\be
\frac{1}{\theta(k)}-\{\theta(k)\}^3\leq1+ q_{s}(\theta(k)) \leq \frac{1}{\theta(k)}+11\{\theta(k)\}^3.
\label{eq:68}
\ee
Here $\{\zeta(k)\}=\zeta(k)-1$ is the fractional part of $\zeta(k)$.
Similar results hold for $e_{s}(\eta(k))$, $f_{s}(\phi(k))$ and $1+q_s(\eta(k))$.
\end{thm}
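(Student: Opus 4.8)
The plan is to base the whole of Theorem~1.5 on the closed form of the completed series, treating all six polynomials $z_s,t_s,e_s,f_s,q_s$ uniformly. Recognizing that $\frac{\sin\pi x}{\pi x}=\sum_{j=0}^{\infty}\frac{(-1)^{j}\pi^{2j}}{(2j+1)!}x^{2j}$, I would first record the two identities
\[
q_\infty(x):=\sum_{j=0}^{\infty}\frac{(-1)^{j}\pi^{2j}}{(2j+1)!}x^{2j}=\frac{\sin\pi x}{\pi x},\qquad p_\infty(x):=\sum_{j=1}^{\infty}\frac{(-1)^{j-1}\pi^{2j}}{(2j+1)!}x^{2j}=1-\frac{\sin\pi x}{\pi x},
\]
so that each of $q_s$ and $p_s$ (and hence $z_s,t_s,e_s,f_s$) is a truncation of one of these up to the explicit leading constant in $(\ref{eq:33a})$--$(\ref{eq:33b})$. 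Writing $T_s(x)=\sum_{j=s}^\infty\frac{(-1)^{j-1}\pi^{2j}}{(2j+1)!}x^{2j}$ for the tail, this gives the exact decomposition
\[
z_s(x)=\Big(1-\tfrac{\sin\pi x}{\pi x}\Big)-T_s(x)+\frac{(-1)^{s-1}s\pi^{2s}}{(2s+1)!},
\]
with the analogous formulae for $t_s,e_s,f_s$ (same main term, different leading constant) and $1+q_s(x)=\big(1+\tfrac{\sin\pi x}{\pi x}\big)-(\text{tail})$.

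The conceptual heart is a tangency computation at $x=1$. Since $\sin\pi=0$, the map $x\mapsto 1-\frac{\sin\pi x}{\pi x}$ fixes $x=1$, and because $\frac{d}{dx}\frac{\sin\pi x}{\pi x}\big|_{x=1}=-1$ it is tangent to the identity there; a short derivative count gives $h''(1)=-2$, so for $x=1+\epsilon$ one has $1-\frac{\sin\pi x}{\pi x}=x-\epsilon^2+O(\epsilon^3)$, the deviation being second order with a strictly negative leading term. Likewise $1+\frac{\sin\pi x}{\pi x}$ agrees with $1/x$ in value, first and second derivative at $x=1$, whence $1+\frac{\sin\pi x}{\pi x}=\frac1x+\frac{\pi^2}{6}\epsilon^3+O(\epsilon^4)$, a third-order deviation with positive leading term. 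I would convert these into explicit two-sided bounds valid for $0<\epsilon\le\epsilon_0$ by truncating the sine series with the standard alternating-series remainder, obtaining for some absolute constant $c>0$ the estimates $x-\epsilon^2-c\epsilon^3\le 1-\frac{\sin\pi x}{\pi x}\le x-\epsilon^2$ and the matching statement relative to $1/x$.

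Applying this at $x=\zeta(k)=1+\{\zeta(k)\}$ (and at $\theta(k),\eta(k),\phi(k)$) is then routine: $\{\zeta(k)\}=\sum_{n\ge2}n^{-k}$ is tiny, so the main term alone already supplies the deviation $-\{\zeta(k)\}^2$ appearing in $(\ref{eq:65})$, and it remains only to show that the leftover pieces---the truncation tail $T_s(\zeta(k))$ and the leading constant---are small relative to $\{\zeta(k)\}^2$ (respectively $\{\zeta(k)\}^3$ for $q_s$). Combining the $j=s$ term of $-T_s$ with the leading constant produces a net correction of size $\frac{(s-1)\pi^{2s}}{(2s+1)!}\big(1+o(1)\big)$, which by Stirling decays super-exponentially, whereas $\{\zeta(k)\}^2\ge 4^{-k}$ and $\{\theta(k)\}^2\ge 9^{-k}$ decay only geometrically. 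The binding case is $k=2s$ (smallest fractional part, hence smallest right-hand side), and the slack factors $3$ and $11$ in the statement are exactly what absorb the $O(\epsilon^3)$ term of the tangency bound together with the sign-alternating correction; the $\eta,\phi$ and $1/\theta$ cases follow by the identical template with the corresponding leading constant and fractional part.

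The main obstacle is quantitative rather than structural: pinning the thresholds to the stated values $s\ge17,38,34,114$ requires balancing the explicit factorial bound on $\frac{(s-1)\pi^{2s}}{(2s+1)!}$ (and its $\theta$-analogue built on $\frac{\pi^{2s}}{4(2s)!}$) against the explicit lower bounds $\{\zeta(2s)\}\ge 2^{-2s}$ and $\{\theta(2s)\}\ge 3^{-2s}$, and then checking the handful of near-threshold $s$ directly, since that is precisely the range where super-exponential decay overtakes $4^{-2s}$ or $9^{-2s}$. I expect the $\theta$ and $1/\theta$ cases to be the delicate ones, both because $\{\theta(k)\}\approx 3^{-k}$ is markedly smaller than $\{\zeta(k)\}\approx 2^{-k}$---forcing the larger thresholds $38$ and $114$---and because the parity-dependent sign of the correction must be tracked to keep the tight one-sided bounds (the upper bound in $(\ref{eq:65})$--$(\ref{eq:66})$ and the bounds in $(\ref{eq:67})$--$(\ref{eq:68})$) intact at the low end of the admissible range.
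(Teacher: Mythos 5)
Your proposal is correct and follows essentially the same route as the paper: your closed-form identity $p_\infty(x)=1-\frac{\sin \pi x}{\pi x}$ with tail $T_s$ is exactly the paper's Lemma 4.2 (the ``approximate sine lemma''), your tangency computation at $x=1+\epsilon$ reproduces the paper's expansion giving the $-\{\zeta(k)\}^2$ (resp.\ $+\pi^2\{\zeta(k)\}^3$-type) deviation, and your balancing of the super-exponentially small term $\frac{(s-1)\pi^{2s}}{(2s+1)!}$ against $\{\zeta(2s)\}^2\geq 4^{-2s}$ and $\{\theta(2s)\}^2\geq 9^{-2s}$ is precisely the role of the paper's Lemma 4.1, which fixes the thresholds $s\geq 17,38,34,114$. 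The only differences are cosmetic: you phrase the deviation via derivatives rather than direct series manipulation, and you invoke Stirling where the paper uses an elementary $s=t_ik+r$ factorial estimate.
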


\section{Bernoulli relations}
We now establish the Bernoulli relations of the title. Different relations of this type were obtained by Woon \cite{woon}.
\begin{lemma}[Bernoulli trio]
Let $B^\prime_s$ and $B^*_s$ be defined as in $(\ref{eq:n16})$. Then for natural number~$s$, the following three identities hold:
\begin{enumerate}
\item[{\rm (i)}]\[B^*_{2s}= B_{2s},\qquad B^*_{2s-1}= \left ( 1-\frac{1}{2^{2s}},
\right )\frac{2B_{2s}}{s}\quad.\]
\item[{\rm (ii)}]  \[B^\prime_{s}= \frac{B_{s}}{2^s},\]
\item[{\rm (iii)}] \[T(x)=\sum_{s=1}^\infty \frac{2^{2s} B^*_{2s-1}}{(2s-1)!}x^{2s-2}
=\left ( 1+ \sum_{s=1}^\infty \frac{2^{2s} B^\prime_{2s}}{(2s)!}x^{2s}\right )^{-1}  \]
\[
=8\sum_{s=1}^\infty (-1)^{s-1}\frac{\theta(2s)}{\pi^{2s}}x^{2s-2}
=\left ( 1+ 2\sum_{s=1}^\infty (-1)^{s-1}\frac{\phi(2s)}{\pi^{2s}}x^{2s}\right )^{-1}.
\]
\end{enumerate}
\end{lemma}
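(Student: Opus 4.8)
The plan is to prove the three identities in the order (ii), (i), (iii): part (ii) is immediate, part (i) supplies the closed form that drives everything, and part (iii) is then a formal generating-function manipulation built on (i) and (ii). For (ii), I would observe that in the defining recurrence $(\ref{eq:n16})$ for $B^\prime_s$ the factor $2^{-s}$ does not depend on the summation index $k$, so it pulls out of the sum, leaving precisely the standard Bernoulli recurrence $(\ref{eq:n01})$ for $B_s$. Hence $B^\prime_s=2^{-s}B_s$ for every $s\geq1$, and the initial value $B^\prime_0=1=B_0$ is consistent.

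For (i), the first step is to rewrite the definition as $B^*_s=-\frac{2^{-s}}{s+1}\sum_{k=0}^{s-1}\binom{s+1}{k}2^kB_k$ and then to complete the truncated sum to its natural upper limit $k=s+1$. The device is the half-argument evaluation $\sum_{k=0}^n\binom{n}{k}2^kB_k=2^nB_n(1/2)=(2-2^n)B_n$, which follows from $B_n(x)=\sum_k\binom{n}{k}B_kx^{n-k}$ together with the known value $B_n(1/2)=(2^{1-n}-1)B_n$. Applying this with $n=s+1$ and subtracting the two omitted terms $k=s$ and $k=s+1$, I expect to reach the closed form
\[
B^*_s = B_s + \frac{(4-2^{1-s})B_{s+1}}{s+1}\qquad (s\geq 2).
\]
Splitting on the parity of $s$ then finishes the argument: when $s$ is even the odd-index value $B_{s+1}$ vanishes, giving $B^*_{2s}=B_{2s}$; when $s$ is odd (and $\geq3$) the term $B_s$ vanishes and, after simplifying $4-2^{1-s}=4(1-2^{-2s})$ against the denominator $s+1=2s$, one recovers $B^*_{2s-1}=(1-2^{-2s})\,2B_{2s}/s$. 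The boundary case $s=1$ is checked directly against the prescribed initial value $B^*_1=\tfrac14$, which the same formula reproduces.

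For (iii), I would first use (i) and (ii) to linearise the coefficients in terms of ordinary Bernoulli numbers, then pass to zeta values via $(\ref{eq:n001})$ and the definitions $\theta(2s)=(1-2^{-2s})\zeta(2s)$ and $\phi(2s)=2^{-2s}\zeta(2s)$ from $(\ref{eq:n1})$. A short computation should give $\frac{2^{2s}B^*_{2s-1}}{(2s-1)!}=8(-1)^{s-1}\theta(2s)/\pi^{2s}$ and $\frac{2^{2s}B^\prime_{2s}}{(2s)!}=2(-1)^{s-1}\phi(2s)/\pi^{2s}$, which are exactly the equalities of the first and third, and of the second and fourth, expressions. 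It then remains to prove the reciprocal relation, and here I would identify both power series in closed form: the bracketed series is the even part of $t/(e^t-1)$, namely $\frac{x}{2}\coth\frac{x}{2}$, while $T(x)$ collapses, via the half-angle identity $\coth x=\tfrac12(\coth\frac{x}{2}+\tanh\frac{x}{2})$, to $\frac{2}{x}\tanh\frac{x}{2}$. Since $\tanh\frac{x}{2}\coth\frac{x}{2}\equiv1$, the product of the two series is $1$, establishing the inversion.

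The main obstacle is the closed form in (i): the non-obvious step is recognising that completing the sum to $k=s+1$ is exactly what unlocks the half-argument evaluation $B_n(1/2)=(2^{1-n}-1)B_n$, followed by careful bookkeeping of the two subtracted top terms and the parity split, including the $s=1$ boundary and the vanishing of $B_{2s-1}$. Once $(\ref{eq:n16})$ has been reduced to explicit Bernoulli numbers, parts (ii) and (iii) are essentially formal, with (iii) amounting to recognising two standard hyperbolic generating functions and a single half-angle identity.
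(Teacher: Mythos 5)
Your proposal is correct, and parts (ii) and (iii) follow essentially the paper's own route: (ii) is the same one-line observation, and (iii) is the same identification $T(x)=\tfrac{2}{x}\tanh\tfrac{x}{2}=\left(\tfrac{x}{2}\coth\tfrac{x}{2}\right)^{-1}$ via the standard hyperbolic series, followed by conversion to $\theta$ and $\phi$ through $(\ref{eq:n001})$. The genuine difference is in part (i). The paper proves $B^*_{2s}=B_{2s}$ by rearranging the recurrence $(\ref{eq:n0})$, a result imported from the author's earlier work, and only invokes the half-argument value $B_{2s}(1/2)=(2^{1-2s}-1)B_{2s}$ for the odd-indexed identity. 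You instead run the half-argument evaluation $\sum_{k=0}^{n}\binom{n}{k}2^kB_k=2^nB_n(1/2)=(2-2^n)B_n$ with $n=s+1$ uniformly, completing the truncated sum in $(\ref{eq:n16})$ and subtracting the two top terms to get the single closed form
\[
B^*_s=B_s+\frac{\left(4-2^{1-s}\right)B_{s+1}}{s+1},\qquad s\geq 2,
\]
from which both identities of (i) drop out by parity (with the $s=1$ case checked against the prescribed initial value $B^*_1=\tfrac14$, which indeed satisfies the stated identity even though the closed form itself does not apply there). I verified the closed form and both specialisations; they are correct. What this buys you is self-containedness: your argument needs only the classical multiplication-theorem value $B_n(1/2)$, whereas the paper's treatment of the even case depends on $(\ref{eq:n0})$ from \cite{mcl1}, \cite{mcl2}. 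What the paper's route buys is narrative economy within its own framework, since $(\ref{eq:n0})$ is already central to the paper and the even case then takes two lines. One small caution: in your parity split you reuse the letter $s$ for both the old odd index and the new index (as in ``$s+1=2s$''); when writing this up, introduce $s=2m-1$ explicitly to keep the bookkeeping unambiguous.
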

As an immediate consequence of the second identity in $(\rm i)$, we have
\be
\theta(2s)=\frac{(-1)^{s+1}2^{2s-3}\pi^{2s}B_{2s-1}^*}{(2s-1)!}.
\label{eq:R10}
\ee
The first few values of $B_s$ and $B^*_s$ are given in the table above.
\begin{table}
\setlength{\tabcolsep}{0.6\tabcolsep}
\[
\renewcommand{\arraystretch}{1.4}
\begin{tabular}{|c|c|c|c|c|c|c|c|c|c|c|c|c|c|}\hline
s & 0 & 1 & 2 & 3 & 4 & 5 & 6 & 7 & 8 & 9 & 10 & 11 & 12\\ \hline
 $B(s)$ & 1 & $\frac{-1}{2}$ & $\frac{1}{6}$ & 0 & $\frac{-1}{30}$ & 0 & $\frac{1}{42}$ & 0 & $\frac{-1}{30}$ & 0 & $\frac{5}{66}$ & 0 & $\frac{-691}{2730}$\\

$B^\star(s)$ & 1 & $\frac{1}{4}$ & $\frac{1}{6}$ & $\frac{-1}{32}$ & $\frac{-1}{30}$ & $\frac{1}{64}$ & $\frac{1}{42}$ & $\frac{-17}{1024}$ & $\frac{-1}{30}$ &
$\frac{31}{1024}$ & $\frac{5}{66}$ & $\frac{-691}{8192}$ & $\frac{-691}{2730}$\\ \hline
\end{tabular}
\]
\end{table}

\begin{proof}
Let the $s$-th Bernoulli number, $B_s$, and the $s$-th Bernoulli polynomial, $B_s(x)$, be defined as in $(\ref{eq:n01})$,
where $B_0 =1$.

The first expression in $(\rm i)$ follows directly from rearranging the identity in ($\ref{eq:n0}$). We have
\[
2^{2s-1}B_{2s}=
\frac{s}{2s+1}-\frac{1}{2s+1}\sum_{k=1}^{s-1}\binom{2s+1}{2k}2^{2k-1}B_{2k}
\]
\[
=
\frac{s}{2s+1}+\frac{1}{2s+1}\left ( \frac{1}{2}-\frac{(2s+1)}{2}\right )
-\frac{1}{2s+1}\sum_{k=0}^{2s-2}\binom{2s+1}{k}2^{k-1}B_{k}
\]
\[
=-\frac{1}{2s+1}\sum_{k=0}^{2s-1}\binom{2s+1}{k}2^{k-1}B_{k},
\]
so that
\[
B_{2s}=-\frac{1}{2s+1}\sum_{k=0}^{2s-1}\binom{2s+1}{k}2^{k-2s}B_{k}=B^{*}_{2s},
\]
as required.

To obtain the second part of $(\rm i)$ we consider $B_s(x)$ with $s>1$ and $x=\frac{1}{2}$. Then we have
\[
(2^{1-s}-1)B_s=B_s\left(\frac{1}{2}\right )=
\sum_{k=0}^{s}\binom{s}{k}\left (\frac{1}{2}\right )^{s-k}B_k,
\]
yielding
\[
(2^{1-2s}-1)B_{2s}=
\sum_{k=0}^{2s}\binom{2s}{k}\left (\frac{1}{2}\right )^{2s-k}B_k.
\]
Thus we get
\[
\frac{(2^{2s-1}-1)}{2^{2s-1}}B_{2s}=
-\sum_{k=0}^{2s}\binom{2s}{k}2^{k-2s}B_k,
\]
\[
\frac{(2^{2s-1}-1)}{2^{2s-1}}B_{2s}+B_{2s}=
-\sum_{k=0}^{2s-2}\binom{2s}{k}2^{k-2s}B_k,
\]
so that
\[
(2^{2s}-1)B_{2s}=
-\sum_{k=0}^{2s-2}\binom{2s}{k}2^{k-1}B_k.
\]
It therefore follows that
\[
\left ( 1-\frac{1}{2^{2s}}\right )\frac{2B_{2s}}{s}=-\frac{2}{2^{2s}s}\sum_{k=0}^{2s-2}\binom{2s}{k}2^{k-1}B_k
\]
\[
=-\frac{1}{2s}\sum_{k=0}^{2s-2}\binom{2s}{k}2^{k-2s+1}B_k,
\]
and replacing $s$ with $2s-1$ in $(\ref{eq:n16})$ we deduce the result. The identity $(\ref{eq:R10})$ then follows from the definition of $\theta(2s)$.

Part $(\rm ii)$ of the Lemma can be obtained by simply multiplying through by~$2^{-s}$ in the definition for $B_s$, although for part $(\rm iii)$, we need to consider the series expansions of both $\coth{x}$ and $\tanh{x}$. It is known from \cite{handbook} that
\[
\coth{x}=x^{-1}+\sum_{s=1}^\infty \frac{2^{2s}B_{2s}}{(2s)!}x^{2s-1},\qquad |x|<\pi,
\]
and that
\[
\tanh{x}=\sum_{s=1}^\infty \frac{2^{2s}(2^{2s}-1)B_{2s}}{(2s)!}x^{2s-1},\qquad |x|<\frac{\pi}{2}.
\]
Writing
\[
T(x)=\frac{2}{x}\tanh{\frac{x}{2}}=\left (\frac{x}{2}\coth{\frac{x}{2}}\right )^{-1},\qquad |x|<\pi,
\]
then gives
\[
\sum_{s=1}^\infty \frac{2^{2s+1}(2^{2s}-1)B_{2s}}{(2s)!}\frac{x^{2s-2}}{2^{2s-1}}=\left (1+\sum_{s=1}^\infty
\frac{2^{2s}B_{2s}}{(2s)!}\frac{x^{2s}}{2^{2s}}\right )^{-1},
\]
so that
\[
\sum_{s=1}^\infty \frac{4(2^{2s}-1)B_{2s}}{(2s)!}x^{2s-2}=\left (1+\sum_{s=1}^\infty \frac{B_{2s}}{(2s)!}x^{2s}\right )^{-1},
\]
and
\[
\sum_{s=1}^\infty \frac{2}{s}\frac{(2^{2s}-1)B_{2s}}{(2s-1)!}x^{2s-2}=\left (1+\sum_{s=1}^\infty \frac{B_{2s}}{(2s)!}x^{2s}\right )^{-1}.
\]
We then apply the first two parts of this lemma to obtain the polynomial result
\be
T(x)=\sum_{s=1}^\infty \frac{2^{2s} B^*_{2s-1}}{(2s-1)!}x^{2s-2}
=\left ( 1+ \sum_{s=1}^\infty \frac{2^{2s} B^\prime_{2s}}{(2s)!}x^{2s}\right )^{-1},
\label{eq:n25}
\ee
and from the definitions of $\theta(2s)$ and $\phi(2s)$ we obtain the final display in part~$(\rm iii)$.
\end{proof}
With the aid of Lemma 2.1, we are now in a position to prove Theorems 1.1 and 1.2.

\begin{proof}[Proof of Theorem 1.1]
To see $(\ref{eq:R13})$ we have
\[
Q_{2s+1}(z)=\sum_{k=0}^{\left [\frac{2s+2}{2}\right ]}
\frac{B_{2s+2-2k}^* B_{2k}^*}{(2s+2-2k)!(2k)!}z^{2k}
=\sum_{k=0}^{s+1}\frac{B_{2s+2-2k} B_{2k}}{(2s+2-2k)!(2k)!}z^{2k}=R_{2s+1}(z),
\]
and for $(\ref{eq:R14})$, when $r=2s$,
\[
Q_{2s}(z)=R_{2s}(z)=\sum_{k=0}^{s}
\frac{B_{2s+1-2k}^* B_{2k}^*}{(2s+1-2k)!(2k)!}z^{2k}=\sum_{k=1}^{s+1}
\frac{B_{2s+2-2k} B_{2k-1}^*}{(2s+2-2k)!(2k-1)!}z^{2s+2-2k}
\]
\[
=\sum_{k=1}^{s+1}\frac{B_{2s+2-2k}}{(2s+2-2k)!(2k-1)!}\left ( 1-\frac{1}{2^{2k}}
\right )\frac{2B_{2k}}{k}z^{2s+2-2k}
\]
\[
=\sum_{k=1}^{s+1}\left (2^{2k}-1\right )\frac{4B_{2s+2-2k}B_{2k}}{(2s+2-2k)!(2k)!2^{2k}}z^{2s+2-2k}
\]
\[
=4z^{2s+2}\sum_{k=0}^{s+1}\left (2^{2k}-1\right )\frac{B_{2s+2-2k}B_{2k}}{(2s+2-2k)!(2k)!}
\left (\frac{1}{2z}\right )^{2k}
\]
\[
=4z^{2s+2}\left (R_{2s+1}\left (\frac{1}{z}\right )-
R_{2s+1}\left (\frac{1}{2z}\right )\right ).
\]
Using the odd-indexed reciprocal relationship of $(\ref{eq:R2})$ then gives
\[
z^{2s+2}R_{2s+1}\left (\frac{1}{z}\right )=R_{2s+1}(z),\qquad
z^{2s+2}R_{2s+1}\left (\frac{1}{2z}\right )=\frac{1}{2^{2s+2}}R_{2s+1}(2z),
\]
from which we obtain $Q_{2s}(z)=$
\[
R_{2s}(z)=4z^{2s+2}\left (R_{2s+1}\left (\frac{1}{z}\right )-
R_{2s+1}\left (\frac{1}{2z}\right )\right )
=4\left (R_{2s+1}(z)-\frac{1}{2^{2s+2}}R_{2s+1}(2z)\right ).
\]
which is the relationship given in $(\ref{eq:R14})$. Setting
\[
P_{2s}(z)=R_{2s}\left (z\right )-R_{2s}\left (\frac{z}{2}\right ),
\]
and then applying $(\ref{eq:R2})$ and $(\ref{eq:R14})$ we deduce the final statement of the theorem
\[
P_{2s}(z)=z^{2s+2}P_{2s}\left (\frac{1}{z}\right )=z^{2s+2}\left (R_{2s}\left (\frac{1}{z}\right )-R_{2s}\left (\frac{1}{2z}\right)\right ).
\]
It was proven in \cite{murty1} that
\[
R_{2s+1}(2)=-\frac{(2s+1)B_{2s+2}}{(2s+2)!},
\]
so to obtain the left hand identity in $(\ref{eq:R145})$ we only need to show that
\[
R_{2s+1}(2)=R_{2s+1}(1).
\]
We recall that the generating function for the Bernoulli numbers is
\[
\frac{t}{e^t-1}=\sum_{n=0}^\infty\frac{B_n t^n}{n!},
\]
and that for $j\geq 1$, $B_{2j+1}=0$. Hence
\[
\left (\frac{t}{e^t-1}\right )\left (\frac{t}{e^t-1}\right )=\frac{t^2}{(e^t-1)^2}
=\left (\sum_{k=0}^\infty\frac{B_k t^k}{k!}\right )\left (\sum_{n=0}^\infty\frac{B_n t^n}{n!}\right ),
\]
and for $s\geq 1$, the coefficient of $t^{2s+2}$ in this product of sums is
\begin{eqnarray*}
\sum_{k=0}^{2s+2}\frac{B_{2s+2-k} B_{k}}{(2s+2-k)!(k)!}1^{k}
&=&\sum_{k\, \mathrm{even}}^{2s+2}\frac{B_{2s+2-k} B_{k}}{(2s+2-k)!(k)!}1^{k}\\
=\sum_{k=0}^{s+1}\frac{B_{2s+2-2k} B_{2k}}{(2s+2-2k)!(2k)!}1^{2k}&=&R_{2s+1}(1).
\end{eqnarray*}
We notice also that
\[
\frac{\rm d}{{\rm d}t}\left (\frac{t^2}{e^t-1} \right )
=\frac{2t}{e^t-1}-\frac{t^2}{e^t-1}-\frac{t^2}{(e^t-1)^2}.
\]
Hence
\[
\frac{t^2}{(e^t-1)^2}=\frac{2t}{e^t-1}-\frac{t^2}{e^t-1}-\frac{\rm d}{{\rm d}t}\left (\frac{t^2}{e^t-1} \right )
\]
\[
\Rightarrow \frac{t^2}{(e^t-1)^2}=\sum_{n=0}^\infty\frac{B_n t^n}{n!}(2-t)
-\frac{\rm d}{{\rm d}t}\left ( \sum_{n=0}^\infty\frac{B_n t^{n+1}}{n!}\right )
\]
\[
=\sum_{n=0}^\infty\frac{B_n }{n!}\left (2t^n-t^{n+1}-(n+1)t^{n}\right )
=\sum_{n=0}^\infty\frac{B_n }{n!}\left (-(n-1)t^n-t^{n+1}\right ).
\]
So for $n=2s+2$ with $s\geq 1$, the coefficient of $t^{2s+2}$ in the above sum is given by
\[
-\frac{(n-1)B_n}{n!}=-\frac{(2s+1)B_{2s+2}}{(2s+2)!},
\]
and equating the two different expressions for the coefficients of $t^{2s+2}$ gives
\[
R_{2s+1}(1)=R_{2s+1}(2)=-\frac{(2s+1)B_{2s+2}}{(2s+2)!},
\]
where the right-hand identity in $(\ref{eq:R145})$ is obtained in a similar fashion. Hence $R_{2s}(1/2)=0$ and
applying $(\ref{eq:R14})$ we deduce the two expressions in $(\ref{eq:R15})$.

It was shown in  \cite{murty1} that when $s$ is even $R_{2s+1}(i)=0$. To prove the remaining identity in $(\ref{eq:R16})$ and also the second Corollary we need to introduce Grosswald's generalisation of Ramanujan's formula given in $(\ref{eq:R3})$.

Grosswald defines
\[
\sigma_t(n)=\sum_{d\mid n}d^t,\quad \hbox{\rm and}
\quad F_s(z)=\sum_{n=1}^{\infty}\sigma_{-s}(n)e^{2\pi i n z}
=\sum_{n=1}^{\infty}\frac{\sigma_{s}(n)}{n^s}e^{2\pi i n z},
\]
so that we may also write
\[
{F_s(z)=\sum_{n,\,m=1}^{\infty}n^{-s}e^{2\pi i nm z}}
=\sum_{n=1}^{\infty}\frac{1}{n^s}\left (\frac{e^{2\pi i n z}}{1-e^{2\pi i n z}}\right )
\]
\[
=-\zeta(s)-\sum_{n=1}^{\infty}\frac{1}{n^s(e^{2\pi i n z}-1)}=-\zeta(s)-F_s(-z).
\]
For any $z$ lying in the upper half-plane, Grosswald obtained
\be
F_{2s+1}(z)-z^{2s}F_{2s+1}\left (\frac{-1}{z} \right )
=\frac{1}{2}\zeta(2s+1)(z^{2s}-1)+\frac{(2\pi i)^{2s+1}}{2z}R_{2s+1}(z),
\label{eq:R24}
\ee
and he set $z=i\sqrt{\beta/\alpha}=i\beta/\pi$ in $(\ref{eq:R24})$ to get Ramanujan's formula $(\ref{eq:R3})$.

Substituting $(\ref{eq:R24})$ into $(\ref{eq:R14})$ we have
\[
\displaystyle{\frac{(2\pi i)^{2s+1}}{8z}\mathcal{R}_{2s}(z)=-\frac{1}{2}\zeta(2s+1)\left (\frac{z^{2s}}{2}-1+\frac{1}{2^{2s+1}}\right )}
\]
\be
+F_{2s+1}(z)-z^{2s}F_{2s+1}\left (\frac{-1}{z} \right )
-\frac{1}{2^{2s+1}}F_{2s+1}(2z)+\frac{z^{2s}}{2}F_{2s+1}\left (\frac{-1}{2z} \right ),
\label{eq:R25}
\ee
and setting $z=i/2$ in $(\ref{eq:R25})$ when $s$ is even gives
\be
\frac{(2\pi )^{2s+1}}{4}R_{2s}(i/2)=F_{2s+1}(i/2)-\frac{1}{2^{2s}}F_{2s+1}(2i)+\frac{1}{2}\eta(2s+1),
\label{eq:R26}
\ee
and when $s$ is odd
\be
-\frac{(2\pi )^{2s+1}}{4}R_{2s}(i/2)=F_{2s+1}(i/2)-\frac{1}{2^{2s}}F_{2s+1}(i)
+\frac{1}{2^{2s}}F_{2s+1}(2i)+\frac{1}{2}\zeta(2s+1).
\label{eq:R27}
\ee
Similarly for $z=i$ in $(\ref{eq:R25})$ when $s$ is even we have
\be
\frac{(2\pi )^{2s+1}}{4}R_{2s}(i)=F_{2s+1}(i/2)-\frac{1}{2^{2s}}F_{2s+1}(2i)+\frac{1}{2}\eta(2s+1),
\label{eq:R28}
\ee
and when $s$ is odd
\be
-\frac{(2\pi )^{2s+1}}{4}R_{2s}(i)=-F_{2s+1}(i/2)+4F_{2s+1}(i)-\frac{1}{2^{2s}}F_{2s+1}(2i)+\frac{1}{2}\left (3-\frac{1}{2^{2s}}\right )\zeta(2s+1).
\label{eq:R29}
\ee
When $s$ is even $(\ref{eq:R26})$ and $(\ref{eq:R28})$ together imply that $R_{2s}(i)=R_{2s}(i/2)$, which is the final
expression of the first Corollary.

It can be deduced from $(\ref{eq:R27})$ and $(\ref{eq:R29})$, using a similar approach to Murty et al. \cite{gun}, that $(\ref{eq:R22})$ is true for every integer $s\geq 1$. Applying the same method to $(\ref{eq:R26})$ (or $(\ref{eq:R28})$) in order that we may prove $(\ref{eq:R23})$, we argue as follows. Let $s$ be even. Then
\[
\frac{(2\pi )^{2s+1}}{4}R_{2s}(i/2)=F_{2s+1}(i/2)-\frac{1}{2^{2s}}F_{2s+1}(2i)+\frac{1}{2}\eta(2s+1)
\]
\[
=\sum_{n=1}^{\infty}\frac{1}{n^{2s+1}}\left (\frac{e^{-\pi  n }}{1-e^{-\pi  n }}\right )
-\frac{1}{2^{2s}}\sum_{n=1}^{\infty}\frac{1}{n^{2s+1}}\left (\frac{e^{-4\pi  n }}{1-e^{-4\pi  n }}\right )+\frac{1}{2}\eta(2s+1)
\]
\[
=\sum_{n=1}^{\infty}\frac{1}{n^{2s+1}}\left (\frac{1}{e^{\pi  n }-1}-\frac{1}{2^{2s}(e^{4\pi  n }-1)}\right )
+\frac{1}{2}\eta(2s+1).
\]
The right-hand side of this equation is is a sum of positive terms, and so is non-zero. The left-hand side is therefore a non-zero rational multiple of $\pi^{2s+1}$, where $s$ is an even integer. Consequently, for every integer $s\geq 1$, at least one of
\[
\zeta(4s+1),\qquad \sum_{n=1}^{\infty}\frac{1}{n^{4s+1}}\left (\frac{1}{e^{\pi  n }-1}-\frac{1}{2^{4s}(e^{4\pi  n }-1)}\right )
\]
is transcendental.

\end{proof}

\begin{proof}[Proof of Theorem 1.2]

\mbox{ }\\
\indent
From Lemma 2.1 (i) we have
\[
\left ( 1-\frac{1}{2^{2s}}\right )\frac{2B_{2s}}{s}=-\frac{1}{2s}\sum_{k=0}^{2s-2}\binom{2s}{k}2^{k-2s+1}B_k,
\]
so that
\[
\left ( 1-\frac{1}{2^{2s}}\right )B_{2s}=-\frac{s}{4s}\left ( \binom{2s}{1}2^{2-2s}B_1+\sum_{k=0}^{s-1}\binom{2s}{2k}2^{2k-2s+1}B_{2k}\right ).
\]
Hence
\[
\theta(2s)=\left ( 1-\frac{1}{2^{2s}}\right )\frac{(-1)^{s-1}2^{2s-1}\pi^{2s}B_{2s}}{(2s)!}
\]
\[
=\frac{(-1)^{s-1}\pi^{2s}}{4}\left ( \frac{2s}{(2s)!}-\sum_{k=0}^{s-1}\frac{2^{2k}B_{2k}}{(2s-2k)!(2k)!}\right )
\]
\[
=(-1)^{s-1}\pi^{2s}\left ( \frac{2s-1}{4(2s)!}-\sum_{k=1}^{s-1}\frac{2^{2k}B_{2k}}{4(2s-2k)!(2k)!}\right ),
\]
yielding
\[
\theta(2s)=(-1)^{s-1}\left ( \frac{(2s-1)\pi^{2s}}{4(2s)!}+\sum_{k=1}^{s-1}\frac{(-1)^k \pi^{2s-2k}}{2(2s-2k)!}\zeta(2k)\right ),
\]
which is the required identity given in $(\ref{eq:n4})$.

To obtain $(\ref{eq:n5})$ and $(\ref{eq:n10})$ we use the definition in $(\ref{eq:n16})$ so that
\[
\phi(2s)=\frac{(-1)^{s-1}\pi^{2s}B^\prime_{2s}}{2(2s)!}=\frac{(-1)^s \pi^{2s}}{(2s+1)!}\sum_{k=0}^{2s-1}\binom{2s+1}{k}2^{-1}B_k.
\]
\[
=\frac{(-1)^s \pi^{2s}}{(2s+1)!}\left ( \frac{(2s+1)B_1}{2}+\sum_{k=0}^{s-1}\binom{2s+1}{2k}\frac{B_{2k}}{2}\right )
\]
\[
=(-1)^{s-1} \pi^{2s}\left (
\frac{(2s+1)}{4(2s+1)!}-\frac{1}{2(2s+1)!}+\sum_{k=1}^{s-1}\frac{2^{2k-1}\pi^{2k}B_{2k}}{(2s-2k+1)!(2k)!2^{2k}\pi^{2k}}\right ),
\]
giving the required expressions
\[
\phi (2s)=(-1)^{s-1} \left ( \frac{(2s-1)\pi^{2s}}{4(2s+1)!}+\sum_{k=1}^{s-1}\frac{(-1)^k\pi^{2s-2k}}{(2s-2k+1)!2^{2k}}\zeta (2k)\right )
\]
\[
=(-1)^{s-1} \left ( \frac{(2s-1)\pi^{2s}}{4(2s+1)!}+\sum_{k=1}^{s-1}\frac{(-1)^k\pi^{2k}}{(2k+1)!2^{2s-2k}}\zeta (2s-2k)\right ),
\]
and
\[
\phi(2s)=(-1)^{s-1} \left ( \frac{(2s-1)\pi^{2s}}{4(2s+1)!}+\sum_{k=1}^{s-1}\frac{(-1)^{s-k}\pi^{2s-2k}}{(2s-2k+1)!}\phi (2k)\right )
\]
\[
=(-1)^{s-1} \left ( \frac{(2s-1)\pi^{2s}}{4(2s+1)!}+\sum_{k=1}^{s-1}\frac{(-1)^{s-k}\pi^{2k}}{(2k+1)!}\phi (2s-2k)\right ).
\]
We substitute $(\ref{eq:1})$ and $(\ref{eq:n5})$ into the first identity in $(\ref{eq:n3})$ to obtain
\[
\theta(2s)=\zeta(2s)-\phi(2s)
\]
\[
=(-1)^{s-1} \pi^{2s}\left ( \frac{(4s-2s+1)}{4(2s+1)!}+\sum_{k=1}^{s-1}\frac{(-1)^k}{(2s-2k+1)!\pi^{2k}}(\zeta(2k)-\phi (2k))\right )
\]
\[
=(-1)^{s-1} \pi^{2s}\left ( \frac{(4s-2s+1)}{4(2s+1)!}+\sum_{k=1}^{s-1}\frac{(-1)^k}{(2s-2k+1)!\pi^{2k}}\theta(2k)\right )
\]
\[
=(-1)^{s-1} \left ( \frac{\pi^{2s}}{4(2s)!}+\sum_{k=1}^{s-1}\frac{(-1)^{s-k}\pi^{2k}}{(2k+1)!}\theta(2s-2k)\right ),
\]
which is the expression in $(\ref{eq:n8})$.

Finally, to obtain $(\ref{eq:n11})$ we substitute $(\ref{eq:n8})$ and $(\ref{eq:n10})$ into the identity
\[
\eta(2s)=\theta(2s)-\phi(2s),
\]
given by the second relation in $(\ref{eq:n3})$.
\end{proof}

\section{Families of determinant equations}
This section describe some of the fundamental relationships between the three types of MCL determinant and certain recurrence relations.
\begin{lemma}
Let $h_1,h_2,\ldots,h_s$, $H_1,H_2,\ldots,H_s$ and $G_1,G_2,\ldots,G_s$ be given.
For $k=1,\ldots, s$, let $\Delta_k({\bf h})$ be the $k\times k$ type {\rm 1} MCL determinant in $(\ref{eq:d1})$. Let
$\Psi_k({\bf h},{\bf H})$ be the $k\times k$ type {\rm 2} MCL determinant in $(\ref{eq:d11})$ and let
$\Lambda_k({\bf h},{\bf H},{\bf G})$ be the $k\times k$ type {\rm 3} MCL determinant in $(\ref{eq:d12})$.
Let $\Delta_0({\bf h})=\Psi_0({\bf h},{\bf H})=\Lambda_0({\bf h},{\bf H},{\bf G})=~1$. Then
\begin{align}
\Delta_s({\bf h})&=-\sum_{k=0}^{s-1}h_{s-k}\Delta_k({\bf h}),
\label{eq:d2}\\
\Psi_s({\bf h},{\bf H})&=-\sum_{k=0}^{s-1}H_{s-k}\Delta_k({\bf h}),
\label{eq:d21}\\
\Psi_s({\bf h},{\bf H})&=-H_s-\sum_{k=1}^{s-1}h_{s-k}\Psi_k({\bf h},{\bf H}),
\label{eq:d22}\\
\Lambda_s({\bf h},{\bf H},{\bf G}) &=-\sum_{k=0}^{s-1}H_{s-k}\Psi_k({\bf h},{\bf G}).
\label{eq:d225}
\end{align}
Conversely, if $\Delta_0({\bf h})=\Psi_0({\bf h},{\bf H})=\Lambda_0({\bf h},{\bf H},{\bf G})=1$ and $\Delta_1({\bf h}),\ldots,\Delta_s({\bf h})$, $h_1,\ldots,h_s$ satisfy $(\ref{eq:d2})$, then
$\Delta_s({\bf h})$ is given in terms of $h_1,\ldots,h_s$ by the MCL determinant $(\ref{eq:d1})$. In addition, if $\Psi_1({\bf h},{\bf H}),\ldots,\Psi_s({\bf h},{\bf H})$ and $H_1,\ldots,H_s$
satisfy either of $(\ref{eq:d21})$ or $(\ref{eq:d22})$ (one implies the other), then $\Psi_s({\bf h},{\bf H})$ is given in terms of $h_1,\ldots,h_s$ and
$H_1,\ldots,H_s$ by the half-weighted MCL determinant $(\ref{eq:d11})$. As a further addition, if $\Lambda_1({\bf h},{\bf H},{\bf G}),\ldots,\Lambda_s({\bf h},{\bf H},{\bf G})$ and $G_1,\ldots,G_s$ also satisfy $(\ref{eq:d225})$ then $\Lambda_s({\bf h},{\bf H},{\bf G})$ is given in terms of $h_1,\ldots,h_s$, $H_1,\ldots,H_{s-1}$ and $G_1,\ldots,G_s$, by the fully-weighted MCL determinant in $(\ref{eq:d12})$.

We refer to the above recurrence relations according to which type of MCL determinant they relate to; \emph{type 1} recurrence relations are of the form $(\ref{eq:d2})$; \emph{type 2} recurrence relations are of the form
$(\ref{eq:d21})$ or $(\ref{eq:d22})$ and \emph{type 3} recurrence relations, where $\Psi_s$ satisfies a type {\rm 2} recurrence relation, are of the form $(\ref{eq:d225})$.
\end{lemma}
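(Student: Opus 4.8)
The plan is to derive all four forward identities by Laplace (cofactor) expansion of the relevant MCL determinant, exploiting the structure common to all three types: a superdiagonal of $1$'s with zeros strictly above it. The governing principle is that deleting a single row and column from such a matrix leaves a \emph{block lower-triangular} minor, one of whose diagonal blocks is lower triangular with unit diagonal (hence has determinant $1$), while the other is a smaller determinant of the same MCL type. Throughout I would carry the $(-1)^s$ normalisation, writing $A_k$ for the unsigned matrix of $(\ref{eq:d1})$ so that $\det(A_k)=(-1)^k\Delta_k(\mathbf{h})$, and similarly for the unsigned matrices underlying $(\ref{eq:d11})$ and $(\ref{eq:d12})$.

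First I would prove $(\ref{eq:d2})$ by expanding $\det A_s$ along its last row $(h_s,h_{s-1},\dots,h_1)$. The minor obtained by deleting row $s$ and column $j$ splits along the column partition $\{1,\dots,j-1\}\cup\{j+1,\dots,s\}$: the top-right block vanishes because every entry there lies strictly above the superdiagonal, the top-left block is exactly $A_{j-1}$, and the bottom-right block is lower triangular with $1$'s on its diagonal. Hence that minor equals $\det A_{j-1}$; reindexing $k=j-1$ and combining the cofactor signs $(-1)^{s+j}$ with the normalisation collapses the alternating signs to give $\Delta_s(\mathbf{h})=-\sum_{k=0}^{s-1}h_{s-k}\Delta_k(\mathbf{h})$. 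For $(\ref{eq:d21})$ I would expand the type-$2$ matrix along its first column $(H_1,\dots,H_s)^{\!\top}$; deleting row $i$ and column $1$ again yields a block lower-triangular minor whose top-left block is unit lower triangular and whose bottom-right block is the Toeplitz block $A_{s-i}$, so the minor is $\det A_{s-i}$, producing $(\ref{eq:d21})$.

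Identity $(\ref{eq:d22})$ I would obtain by expanding the same type-$2$ matrix along its last row $(H_s,h_{s-1},\dots,h_1)$: the $j=1$ minor is unit triangular with value $1$, matching the isolated $-H_s$ term, while for $j\ge 2$ each minor factors as $\det(\Psi_{j-1}(\mathbf{h},\mathbf{H}))$ times $1$. To justify the claim that $(\ref{eq:d21})$ and $(\ref{eq:d22})$ each imply the other, I would substitute the expression for $\Psi_k$ from $(\ref{eq:d21})$ into the right-hand side of $(\ref{eq:d22})$ and simplify the resulting double sum using the $\Delta$-recurrence $(\ref{eq:d2})$, recovering $(\ref{eq:d21})$ (and conversely). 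The step I expect to be most delicate is $(\ref{eq:d225})$, which I would also attack by first-column expansion of the type-$3$ matrix. Deleting row $i$ and column $1$ leaves a block lower-triangular minor whose bottom-right block carries the $\mathbf{G}$-entries in its \emph{last row} rather than its first column, so it is not literally a $\Psi$-matrix. The key observation is that conjugating this block by the anti-diagonal permutation and then transposing---two operations that each preserve the determinant---maps it precisely onto the matrix of $\Psi_{s-i}(\mathbf{h},\mathbf{G})$; the required reindexing is the crux of the argument, and once it is verified the sign bookkeeping gives $\Lambda_s(\mathbf{h},\mathbf{H},\mathbf{G})=-\sum_{k=0}^{s-1}H_{s-k}\Psi_k(\mathbf{h},\mathbf{G})$.

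Finally, the converse statements follow at once. Each recurrence expresses the order-$s$ quantity in terms of strictly lower orders together with the newly introduced data, so with the prescribed initial values $\Delta_0=\Psi_0=\Lambda_0=1$ it determines the entire sequence uniquely by induction on $s$. Since the genuine determinants $(\ref{eq:d1})$, $(\ref{eq:d11})$ and $(\ref{eq:d12})$ have just been shown to satisfy these recurrences with those same initial data, any sequence obeying the recurrence must coincide with them, which is exactly the asserted converse.
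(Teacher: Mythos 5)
Your proposal is correct and takes essentially the same approach as the paper: each identity is obtained by Laplace expansion (the paper uses the first column for (\ref{eq:d2}), (\ref{eq:d21}), (\ref{eq:d225}) and the last row for (\ref{eq:d22})), with every minor factored through its block-triangular structure into a unit-triangular block times a smaller MCL determinant, and the converse handled by induction/uniqueness. The only substantive difference is one of care rather than method: where the paper merely says the minors arising from (\ref{eq:d12}) are to be ``compared with'' determinants of the form (\ref{eq:d11}), you spell out the anti-diagonal-flip-plus-transpose (persymmetry) argument that actually justifies identifying those blocks, whose weighted entries sit in the last row, with the determinants $\Psi_k({\bf h},{\bf G})$, whose weighted entries sit in the first column.
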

\begin{corollary}
Let $U_s$, $V_s$ and $W_s$ be the respective $s\times s$ matrices corresponding to the determinants $\Delta_s({\bf h})$, $\Psi_s({\bf h},{\bf H})$ and $\Lambda_s({\bf h},{\bf H},{\bf G})$, i.e.
\[
\Delta_s({\bf h})=|U_s|,\qquad \Psi_s({\bf h},{\bf H})=|V_s|,\qquad
\Lambda_s({\bf h},{\bf H},{\bf G})=|W_s|.
\]
Then denoting the characteristic polynomials of $U_s$, $V_s$ and $W_s$ by
\[
\Delta_s^{(\mu)}({\bf h})=|U_s-\mu I_s|,\qquad \Psi_s^{(\mu)}({\bf h},{\bf H})=|V_s-\mu I_s|,
\qquad \Lambda_s^{(\mu)}({\bf h},{\bf H},{\bf G})=|W_s-\mu I_s|,
\]
we find that the characteristic polynomials of $U_s$, $V_s$ and $W_s$ also satisfy the recurrence relations of the lemma, but with $h_1$ replaced with $h_1-\mu$, $H_1$ replaced with $H_1-\mu$, $G_1$ replaced with $G_1-\mu$ and $\Delta_s({\bf h})$, $\Psi_s({\bf h},{\bf H})$, $\Lambda_s({\bf h},{\bf H},{\bf G})$ respectively replaced by $\Delta_s^{(\mu)}({\bf h})$,
$\Psi_s^{(\mu)}({\bf h},{\bf H})$, $\Lambda_s^{(\mu)}({\bf h},{\bf H},{\bf G})$.
\end{corollary}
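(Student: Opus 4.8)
The plan is to deduce this corollary as an essentially immediate instance of Lemma 3.1, the only genuine work being to recognise the shifted matrices $U_s-\mu I_s$, $V_s-\mu I_s$ and $W_s-\mu I_s$ as MCL matrices of the very same three types, whose determinants must therefore obey the recurrences already proved in the lemma.

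First I would record the precise effect of subtracting $\mu I_s$. Since $\mu I_s$ is diagonal, the operation changes only the diagonal entries and leaves every off-diagonal entry untouched --- in particular the superdiagonal of $1$'s, the first-column entries $h_2,\ldots,h_s$ or $H_2,\ldots,H_s$, and (for type 3) the last-row entries $G_{s-1},\ldots,G_2$ survive verbatim. Reading off the diagonals from $(\ref{eq:d1})$, $(\ref{eq:d11})$ and $(\ref{eq:d12})$ shows that every diagonal entry of $U_s$ equals $h_1$; that in $V_s$ the $(1,1)$ entry is $H_1$ while the remaining diagonal entries are $h_1$; and that in $W_s$ the $(1,1)$ entry is $H_1$, the entries $(2,2),\ldots,(s-1,s-1)$ are $h_1$, and the $(s,s)$ entry is $G_1$. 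Hence subtracting $\mu$ from each diagonal entry realises exactly the substitutions $h_1\mapsto h_1-\mu$, $H_1\mapsto H_1-\mu$ and $G_1\mapsto G_1-\mu$, with all other $h_j,H_j,G_j$ preserved.

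Next I would invoke Lemma 3.1 directly. The observation above identifies $U_s-\mu I_s$ as the type 1 MCL matrix built from $(h_1-\mu,h_2,\ldots,h_s)$, identifies $V_s-\mu I_s$ as the type 2 MCL matrix built from $(h_1-\mu,h_2,\ldots)$ and $(H_1-\mu,H_2,\ldots)$, and identifies $W_s-\mu I_s$ as the type 3 MCL matrix built from the correspondingly shifted ${\bf h}$, ${\bf H}$ and ${\bf G}$. Because the recurrences $(\ref{eq:d2})$--$(\ref{eq:d225})$ hold for \emph{every} choice of the defining sequences, applying them to these shifted sequences yields at once that $\Delta_s^{(\mu)}({\bf h})=|U_s-\mu I_s|$, $\Psi_s^{(\mu)}({\bf h},{\bf H})=|V_s-\mu I_s|$ and $\Lambda_s^{(\mu)}({\bf h},{\bf H},{\bf G})=|W_s-\mu I_s|$ satisfy the same recurrences, now with $h_1,H_1,G_1$ replaced by $h_1-\mu,H_1-\mu,G_1-\mu$ and with $\Delta_k,\Psi_k,\Lambda_k$ replaced by $\Delta_k^{(\mu)},\Psi_k^{(\mu)},\Lambda_k^{(\mu)}$. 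For the type 3 case I would note that the inner determinants in $(\ref{eq:d225})$ are type 2 objects in ${\bf h}$ and ${\bf G}$, so their shifts $\Psi_k^{(\mu)}({\bf h},{\bf G})$ are furnished by the same argument and the recurrence remains self-consistent.

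The one point demanding genuine care --- and where I expect the only real friction --- is the sign and normalisation bookkeeping hidden in the identifications $\Delta_s({\bf h})=|U_s|$, etc. The determinants $(\ref{eq:d1})$--$(\ref{eq:d12})$ carry an overall factor $(-1)^s$, so before subtracting $\mu I_s$ one must fix once and for all whether $U_s,V_s,W_s$ denote the displayed arrays or their signed counterparts, and then verify that the leading $(-\mu)^s$ term of each characteristic polynomial is carried consistently through that factor; the very sign of $\mu$ in the substitution $h_1\mapsto h_1-\mu$ is governed by this choice. Taking $U_s,V_s,W_s$ to be the arrays displayed in $(\ref{eq:d1})$--$(\ref{eq:d12})$, whose diagonals are $h_1$, $H_1$ and $G_1$ as computed above, makes the substitution precisely as stated, and the rest is the routine verification of the previous paragraph. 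No fresh cofactor expansion is required, since all of the underlying determinant identities have already been established in Lemma 3.1.
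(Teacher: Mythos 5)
Your proposal is correct and follows essentially the same route as the paper, whose entire proof is the single observation that one ``simply replaces $h_1$, $H_1$ and $G_1$ with $h_1-\mu$, $H_1-\mu$ and $G_1-\mu$ respectively in the recurrence relations of the Lemma,'' i.e.\ subtracting $\mu I_s$ only shifts the diagonal entries, so the shifted matrices are again MCL matrices of the same three types and Lemma 3.1 applies verbatim to the shifted sequences. Your closing paragraph pinning down the $(-1)^s$ normalisation (taking $U_s$, $V_s$, $W_s$ to be the displayed arrays so that the substitution is exactly $h_1\mapsto h_1-\mu$ rather than $h_1\mapsto h_1+\mu$) addresses a sign convention the paper passes over in silence, and you resolve it in the way that makes the stated corollary hold.
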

\begin{proof}
To obtain (\ref{eq:d2}), we expand the determinant in (\ref{eq:d1}) along its first column starting at the $r$-th row so that
\[
(-1)^s\Delta_s({\bf h})= (-1)^{s-1}1^{s-1}h_s(-1)^0\Delta_0({\bf h})+(-1)^{s-2}1^{s-2}h_{s-1}(-1)^1\Delta_1({\bf h})+
\]
\[
(-1)^{s-3}1^{s-3}h_{s-2}(-1)^2\Delta_2({\bf h})
+\ldots
+h_1(-1)^{s-1}\Delta_{s-1}({\bf h})=(-1)^{s-1}\sum_{k=0}^{s-1}h_{s-k}\Delta_k({\bf h})
\]
and hence the result. Similarly, for (\ref{eq:d21}), we expand the determinant in (\ref{eq:d11}) along its first column starting at the $r$-th row,
yielding
\[
(-1)^s\Psi_s({\bf h},{\bf H})= (-1)^{s-1}1^{s-1}H_s(-1)^0\Delta_0({\bf h})+(-1)^{s-2}1^{s-2}H_{s-1}(-1)^1\Delta_1({\bf h})+
\]
\[
(-1)^{s-3}1^{s-3}H_{s-2}(-1)^2\Delta_2({\bf h})
+\ldots
+H_1(-1)^{s-1}\Delta_{s-1}({\bf h})=(-1)^{s-1}\sum_{k=0}^{s-1}H_{s-k}\Delta_k({\bf h}).
\]
To obtain (\ref{eq:d22}), we expand the determinant in (\ref{eq:d11}) along its $r$-th row starting at the $1$-st column, giving
\[
(-1)^s\Psi_s({\bf h},{\bf H})= (-1)^{s-1}1^{s-1}H_s+(-1)^{s-2}1^{s-2}h_{s-1}(-1)^1\Psi_1({\bf h},{\bf H})+
\]
\[
(-1)^{s-3}1^{s-3}h_{s-2}(-1)^2\Psi_2({\bf h},{\bf H})
+\ldots
+h_1(-1)^{s-1}\Psi_{s-1}({\bf h},{\bf H})
\]
\[
=(-1)^{s-1}\left (H_s+ \sum_{k=0}^{s-1}h_{s-k}\Psi_k({\bf h},{\bf H})\right ).
\]
Finally, to deduce (\ref{eq:d225}), we expand the determinant in (\ref{eq:d12}) along its $1$-st column starting at the $r$-th row, which yields
\[
(-1)^s\Lambda_s({\bf h},{\bf H},{\bf G})= (-1)^{s-1}1^{s-1}H_s+(-1)^{s-2}1^{s-2}H_{s-1}\left |G_1\right |+
\]
\[
(-1)^{s-3}1^{s-3}H_{s-2}\left |
\begin{array}{cc}
h_1 & 1\\
G_2 & G_1\\
\end{array}
\right |
+\ldots
+H_1\left |
\begin{array}{cccc}
h_1 & 1 &   \ldots & 0 \\
h_2 & h_1 &  \ldots & 0 \\
\vdots & \vdots &  \ddots & \vdots\\
h_{s-2} & h_{s-1} &  \ldots & 1\\
G_{s-1} & G_{s-2} & \ldots & G_1\\
\end{array}
\right |,
\]
and by comparing the above determinants with those of the form $(\ref{eq:d11})$ we obtain the result. The converse follows by showing inductively that each $\Delta_s({\bf h})$, $\Psi_s({\bf h},{\bf H})$ and $\Lambda_s({\bf h},{\bf H},{\bf G})$ can be expressed as a determinant of the required form and then re-packing the original determinants expanded above.

To see the Corollary one simply replaces $h_1$, $H_1$ and $G_1$ with $h_1-\mu$, $H_1-\mu$ and $G_1-\mu$ respectively in the recurrence relations of the Lemma.
\end{proof}

\begin{rmk}
We note that the symmetric structure of the $n\times n$ MCL determinant $\Delta_s({\bf h})=|U_s|$, in $(\ref{eq:d1})$, leads to a symmetry in the cofactors of the matrix $U=(u_{i,j})$. Specifically, let $M_{i,j}$ be the cofactor or minor of $u_{i,j}$. Then for $i-j\geq 0$ we have
\be
M_{i,j}=(-1)^{i+j}\Delta_{n-i}({\bf h})\times \Delta_{j-1}({\bf h}).
\label{eq:d226}
\ee
\end{rmk}

\begin{lemma}
With $\Delta_s({\bf h})$ as defined in the previous lemma we have
\be
\Delta_s({\bf h})=
\mathop{\mathop{\sum_{t=1}^s \sum_{d_i\geq 0}}_{d_1+d_2+\ldots +d_s=t}}_{d_1+2d_2+\ldots+sd_s=s} (-1)^{t}\binom{t}{d_1,d_2,\ldots,d_s}
h_1^{d_1}h_2^{d_2}\ldots h_s^{d_s},
\label{eq:d23}
\ee
where the above sum consists of $2^{s-1}$ monomial terms.
\end{lemma}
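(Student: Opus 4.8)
The plan is to read the type-1 recurrence $(\ref{eq:d2})$, namely $\Delta_s({\bf h})=-\sum_{k=0}^{s-1}h_{s-k}\Delta_k({\bf h})$ with $\Delta_0({\bf h})=1$, as the convolution identity satisfied by the coefficients of a reciprocal power series, and then to read off the claimed expansion directly. Adopting the convention $h_0=1$, I would introduce the formal power series $H(x)=\sum_{j\geq 0}h_jx^j$ and $D(x)=\sum_{s\geq 0}\Delta_s({\bf h})x^s$. The recurrence $(\ref{eq:d2})$ says precisely that the coefficient of $x^s$ in the product $H(x)D(x)$ vanishes for every $s\geq 1$, while the constant term equals $h_0\Delta_0({\bf h})=1$; hence $H(x)D(x)=1$, that is
\[
D(x)=\frac{1}{1+\sum_{j\geq 1}h_jx^j}.
\]

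Next I would expand this reciprocal as a geometric series. Writing $A(x)=\sum_{j\geq 1}h_jx^j$, which has zero constant term, the expansion $D(x)=\sum_{t\geq 0}(-1)^tA(x)^t$ is a legitimate identity of formal power series, since $A(x)^t=O(x^t)$ and therefore only the finitely many values $t\leq s$ contribute to the coefficient of $x^s$. Applying the multinomial theorem to $A(x)^t=\bigl(\sum_{j\geq 1}h_jx^j\bigr)^t$ gives
\[
A(x)^t=\sum_{\substack{d_1,d_2,\ldots\geq 0\\ d_1+d_2+\cdots=t}}\binom{t}{d_1,d_2,\ldots}\,h_1^{d_1}h_2^{d_2}\cdots\,x^{\,d_1+2d_2+3d_3+\cdots},
\]
so extracting the coefficient of $x^s$ imposes exactly the two side conditions $d_1+d_2+\cdots+d_s=t$ and $d_1+2d_2+\cdots+sd_s=s$ (the second forcing $d_j=0$ for $j>s$). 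Summing over $t$ from $1$ to $s$ then reproduces $(\ref{eq:d23})$ term by term.

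It remains to count the monomials. Each admissible tuple $(d_1,\ldots,d_s)$ with $\sum_j jd_j=s$ is the same datum as a partition of $s$ having $d_j$ parts equal to $j$, and the multinomial coefficient $\binom{t}{d_1,\ldots,d_s}$ counts the distinct orderings of those $t$ parts. Thus, counting each monomial with the multiplicity furnished by its coefficient, the sum $(\ref{eq:d23})$ may be rewritten as $\sum(-1)^t h_{c_1}h_{c_2}\cdots h_{c_t}$ taken over all compositions $(c_1,\ldots,c_t)$ of $s$, and the number of such compositions is $2^{s-1}$, as claimed.

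I do not expect a serious obstacle here: the argument is essentially the standard passage from a linear convolution recurrence to the reciprocal generating function. The only points requiring care are the justification that the geometric expansion is valid at the level of formal power series (guaranteed by $A(0)=0$) and the bookkeeping that matches the multinomial side conditions with the two constraints in $(\ref{eq:d23})$; the composition count $2^{s-1}$ is then immediate. As an alternative I could instead prove $(\ref{eq:d23})$ by induction on $s$ directly from $(\ref{eq:d2})$, the inductive step amounting to the observation that multiplying $\Delta_k({\bf h})$ by $-h_{s-k}$ prepends a part of size $s-k$ (and one factor of $-1$) to each composition of $k$, so that as $k$ runs over $0,\ldots,s-1$ every composition of $s$ is produced exactly once.
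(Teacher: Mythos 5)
Your proposal is correct, and it reaches the identity by a genuinely different device than the paper. The paper's proof iterates the recurrence $(\ref{eq:d2})$ directly: repeated substitution produces the nested sum $(-1)^w\sum_{k_1}\sum_{k_2}\cdots\sum_{k_w}h_{s-k_1}h_{k_1-k_2}\cdots h_{k_{w-1}-k_w}$, which is read off as a signed sum over compositions of $s$; the multinomial coefficients then enter by regrouping compositions with the same multiset of parts, by comparison with the expansion of $(h_1+\cdots+h_s)^t$. You instead package the recurrence as the convolution identity $H(x)D(x)=1$ and expand the reciprocal $1/(1+A(x))$ as a formal geometric series, letting the multinomial theorem do the regrouping automatically. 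The combinatorial content is identical (compositions of $s$, sign $(-1)^{\text{number of parts}}$, grouping into partitions), but your formulation buys cleaner rigor: the paper's ``repeated use'' step with the unspecified depth $w$ and the claim that the resulting coefficient matches the multinomial coefficient are both somewhat informally argued, whereas in your version these are immediate consequences of $A(0)=0$ and the multinomial theorem. Conversely, the paper's argument is more elementary (no generating-function formalism) and stays visibly attached to the determinant expansion; your closing remark about induction---prepending a part of size $s-k$ to each composition of $k$---is essentially the paper's iteration viewed one step at a time. One point worth noting in either treatment: the count $2^{s-1}$ refers to monomials counted with the multiplicity given by their multinomial coefficients (i.e.\ to compositions, via $\sum_{t=1}^{s}\binom{s-1}{t-1}=2^{s-1}$), not to distinct monomials, whose number is the partition function of $s$; you and the paper both interpret it this way, via $(\ref{eq:d27})$ in the paper's case.
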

\begin{proof}
Repeated use of $(\ref{eq:d2})$ gives
\[
\Delta_s({\bf h})=(-1)^w\sum_{k_1=0}^{s-1}\sum_{k_2=0}^{k_1-1}\ldots \sum_{k_w=0}^{k_{w-1}-1}
h_{s-k_1}h_{k_1-k_2}\ldots h_{k_{w-1}-k_{w}}\Delta_{k_w}({\bf h}),
\]
with $k_w=k_{w-1}-1=0$, so that $\Delta_{k_w}({\bf h})=\Delta_0({\bf h})=1$. Hence we can write
\be
\Delta_s({\bf h})=(-1)^w\sum_{k_1=0}^{s-1}\sum_{k_2=0}^{k_1-1}\ldots \sum_{k_w=0}^{k_{w-1}-1}
h_{s-k_1}h_{k_1-k_2}\ldots h_{k_{w-1}-k_{w}},
\label{eq:d24}
\ee
which is just a sum of products of $h_k$, where the subscripts in each product sum to $s$.
Therefore we have established that $\Delta_s({\bf h})$ is a sum of monomials of the form
\be
\pm h_{1}^{d_1}h_{2}^{d_2}\ldots h_{s}^{d_s},
\label{eq:d25}
\ee
with
\[
d_i\geq 0,\qquad d_1+2 d_2+\ldots+s d_s=s.
\]
We note that for a given $d_1+2 d_2+\ldots+s d_s=s$, with $d_1+d_2+\ldots +d_s=t$, the coefficient of the product
in $(\ref{eq:d25})$ is the same (ignoring sign) as that in the multinomial expansion of
\[
(h_{1}+h_{2}+\ldots +h_{s})^t.
\]
Hence we can write
\be
\Delta_s({\bf h})= \mathop{\mathop{\sum_{t=1}^s \sum_{d_i\geq 0}}_{d_1+d_2+\ldots +d_s=t}}_{d_1+2d_2+\ldots+sd_s=s}(-1)^t \binom{t}{d_1,d_2,\ldots,d_s}
h_{1}^{d_1}h_{2}^{d_2}\ldots h_{s}^{d_s}.
\label{eq:d26}
\ee
To see that the number of monomials in this expression for $\Delta_s({\bf h})$ is equal to $2^{s-1}$, we note that for fixed values of $t$, the rules of
summation give the number of compositions of the integer $s$ into $t$ parts, which is
known to be $\binom{s-1}{t-1}$. Explicitly we have
\be
\mathop{\mathop{\sum_{d_i\geq 0}}_{d_1+d_2+\ldots +d_s=t}}_{d_1+2d_2+\ldots+sd_s=s} \binom{t}{d_1,d_2,\ldots,d_s}
=\binom{s-1}{t-1},
\label{eq:d27}
\ee
so that summing over the values $1\leq t \leq s$ gives the required total of $2^{s-1}$ monomials.
For example, when $s=5$, we have
\[
\Delta_5({\bf h})= -h_1^5 + 4h_1^3 h_2 -(3 h_1 h_2^2 + 3h_1^2 h_3) + (2h_2 h_3 +2 h_1 h_4) -h_5.
\]
\end{proof}
\begin{defn}
Let the five infinite dimensional vectors ${\bf u}$, ${\bf v}$, ${\bf U}_1$, ${\bf U}_2$ and ${\bf U}_3$ be defined such that
\[
{\bf u}\phantom{1}=\left (\frac{1}{3!},\,\frac{1}{5!},\,\frac{1}{7!},\,\ldots,\frac{1}{(2s+1)!},\,\ldots \right ),\qquad
{\bf v}\phantom{1}=\left (\frac{1}{2!},\,\frac{1}{3!},\,\frac{1}{4!},\,\ldots,\frac{1}{(s+1)!},\,\ldots \right )
\]
\[
{\bf U}_1=\left (\frac{1}{3!},\,\frac{2}{5!},\,\frac{3}{7!},\,\ldots,\frac{s}{(2s+1)!},\,\ldots \right ),\qquad
{\bf U}_2=\left (\frac{1}{3!},\,\frac{3}{5!},\,\frac{5}{7!},\,\ldots,\frac{2s-1}{(2s+1)!},\,\ldots \right )
\]
\be
{\bf U}_3=\left (\frac{1}{2!},\,\frac{1}{4!},\,\frac{1}{6!},\,\ldots,\,\frac{1}{(2s)!},\,\ldots \right )\\
\ee
\end{defn}
We now give two well known MCL determinant identities for the Bernoulli numbers \cite{woon}. In the first instance we have
\be
B(s)=s!\Delta_s({\bf v}),
\label{eq:d31}
\ee
which by Lemma 3.1 can be written as the recurrence relation
\[
B_s=-\frac{1}{s+1}\sum_{k=0}^{s-1}\binom{s+1}{k}B_k,
\]
given in $(\ref{eq:n01})$, and by Lemma 3.2, as the double sum
\be
\frac{B(s)}{s!}=
\mathop{\mathop{\sum_{t=1}^s \sum_{d_i\geq 0}}_{d_1+d_2+\ldots +d_s=t}}_{d_1+2d_2+\ldots+sd_s=s} \binom{t}{d_1,d_2,\ldots,d_s}
\frac{(-1)^{t}}{2!^{d_1}3!^{d_2}\ldots (s+1)!^{d_s}}.
\label{eq:d32}
\ee
The second identity states that
\be
B(2s)=-\frac{-(2s)!}{2(2^{2s-1}-1)}\Delta_s({\bf u}),
\label{eq:d325}
\ee
the equivalent forms of which we express in terms of $\eta(2s)$ in Lemma 3.3.

Lemmas 3.1 and 3.2 effectively give us two alternative ways to express an MCL determinant; as a recurrence relation and as a double sum over compositions
of $s$ into $t$ parts. For a half-weighted MCL determinant we get the two alternatives just stated, along with an extra recurrence relation obtained by expanding the determinant along the $r$-th row instead of the first column.

Combining these equivalent methods of expression for $\eta(2s)$, $\zeta(2s)$, $\theta(2s)$ and $\phi(2s)$ we obtain the following lemma.

\begin{lemma}
Let $\eta(2s)$, $\zeta(2s)$, $\theta(2s)$ and $\phi(2s)$ be defined as in Theorem~{ 1.1}. Then
\begin{eqnarray*}
2\eta(2s)&=&
(-1)^s\pi^{2s}\Delta_s({\bf u})\\
&=& \frac{(-1)^{s-1}\pi^{2s}}{(2s+1)!}+\sum_{k=1}^{s-1}\frac{(-1)^{k-1}\pi^{2k}}{(2k+1)!}2\eta{(2s-2k)}\\
&=& \pi^{2s}\mathop{\mathop{\sum_{t=1}^s \sum_{d_i\geq 0}}_{d_1+d_2+\ldots +d_s=t}}_{d_1+2d_2+\ldots+sd_s=s} \binom{s}{d_1,d_2,\ldots,d_s}
\frac{(-1)^{t+s}}{3!^{d_1}5!^{d_2}\ldots (2s+1)!^{d_s}}.
\label{eq:d33}
\end{eqnarray*}
\begin{eqnarray*}
\zeta(2s)&=& (-1)^s\pi^{2s}\Psi_s({\bf u,\,\bf U}_1)\\
&=& \frac{(-1)^{s-1}s\pi^{2s}}{(2s+1)!}+\sum_{k=1}^{s-1}\frac{(-1)^{k-1}k\pi^{2k}}{(2k+1)!}2\eta{(2s-2k)}\\
 &=& \frac{(-1)^{s-1}s\pi^{2s}}{(2s+1)!}+\sum_{k=1}^{s-1}\frac{(-1)^{k-1}\pi^{2k}}{(2k+1)!}\zeta{(2s-2k)}\\
&=&\frac{2^{2s-2}\pi^{2s}}{(2^{2s-1}-1)}
\mathop{\mathop{\sum_{t=1}^s \sum_{d_i\geq 0}}_{d_1+d_2+\ldots +d_s=t}}_{d_1+2d_2+\ldots+sd_s=s} \binom{t}{d_1,d_2,\ldots,d_s}
\frac{(-1)^{t+s}}{3!^{d_1}5!^{d_2}\ldots (2s+1)!^{d_s}}.
\label{eq:d34}
\end{eqnarray*}
\begin{eqnarray*}
4\phi(2s)&=&(-1)^s\pi^{2s}\Psi_s({\bf u,\,\bf U}_2)\\
&=& \frac{(-1)^{s-1}(2s-1)\pi^{2s}}{(2s+1)!}+\sum_{k=1}^{s-1}\frac{(-1)^{k-1}(2k-1)\pi^{2k}}{(2k+1)!}2\eta{(2s-2k)}\\
&=& \frac{(-1)^{s-1}(2s-1)\pi^{2s}}{(2s+1)!}+\sum_{k=1}^{s-1}\frac{(-1)^{k-1}\pi^{2k}}{(2k+1)!}4\phi{(2s-2k)}\\
&=&\frac{\pi^{2s}}{(2^{2s-1}-1)}\mathop{\mathop{\sum_{t=1}^s \sum_{d_i\geq 0}}_{d_1+d_2+\ldots +d_s=t}}_{d_1+2d_2+\ldots+sd_s=s}
\binom{s}{d_1,d_2,\ldots,d_s} \frac{(-1)^{t+s}}{3!^{d_1}5!^{d_2}\ldots (2s+1)!^{d_s}}.
\label{eq:d36}
\end{eqnarray*}
\begin{eqnarray*}
4\theta(2s)&=& (-1)^s\pi^{2s}\Psi_s({\bf u,\,\bf U}_3)\\
&=& \frac{(-1)^{s-1}\pi^{2s}}{(2s)!}+\sum_{k=1}^{s-1}\frac{(-1)^{k-1}\pi^{2k}}{(2k)!}2\eta{(2s-2k)}\\
&=& \frac{(-1)^{s-1}\pi^{2s}}{(2s)!}+\sum_{k=1}^{s-1}\frac{(-1)^{k-1}\pi^{2k}}{(2k+1)!}4\theta{(2s-2k)}\\
&=&\frac{(2^{2s}-1)\pi^{2s}}{(2^{2s-1}-1)}\mathop{\mathop{\sum_{t=1}^s \sum_{d_i\geq 0}}_{d_1+d_2+\ldots +d_s=t}}_{d_1+2d_2+\ldots+sd_s=s}
\binom{s}{d_1,d_2,\ldots,d_s} \frac{(-1)^{t+s}}{3!^{d_1}5!^{d_2}\ldots (2s+1)!^{d_s}}.
\label{eq:d35}
\end{eqnarray*}
\end{lemma}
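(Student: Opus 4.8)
The plan is to treat the four functions in turn, in each case reading off the linear recurrence from Theorem 1.2 (together with $(\ref{eq:1})$), promoting it to an MCL determinant via the converse half of Lemma 3.1, and finally unwinding the determinant into a composition sum via Lemma 3.2. The one structural point to respect is the order of attack: the Dirichlet eta function must be handled first, because the ``$\eta$-form'' recurrences appearing in the $\zeta$, $\theta$ and $\phi$ chains feed on the identity $\Delta_k(\mathbf{u}) = (-1)^k\pi^{-2k}\,2\eta(2k)$ established in that first step.

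First I would dispose of $2\eta(2s)$. Multiplying $(\ref{eq:n11})$ through by $2$ and collapsing the sign product $(-1)^{s-1}(-1)^{s-k}=(-1)^{k-1}$ reproduces the stated recurrence. Writing $D_s := (-1)^s\pi^{-2s}\,2\eta(2s)$ with $D_0=1$, dividing that recurrence by $(-1)^s\pi^{2s}$ and substituting $2\eta(2s-2k)=(-1)^{s-k}\pi^{2s-2k}D_{s-k}$, a reindexing $j=s-k$ turns every surviving sign into $(-1)^{2s-1}=-1$ and leaves exactly $D_s = -\sum_{k=0}^{s-1}u_{s-k}D_k$, which is the type 1 recurrence $(\ref{eq:d2})$ with $\mathbf{h}=\mathbf{u}$. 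The converse direction of Lemma 3.1 then forces $D_s=\Delta_s(\mathbf{u})$, i.e. $2\eta(2s)=(-1)^s\pi^{2s}\Delta_s(\mathbf{u})$, and Lemma 3.2 with $h_j=1/(2j+1)!$ delivers the composition sum.

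For $\zeta$, $\theta$ and $\phi$ I would use their ``self-form'' recurrences, namely $(\ref{eq:1})$, $(\ref{eq:n8})$ and $(\ref{eq:n10})$ after scaling by $1$, $4$, $4$ respectively and the same sign collapse. Scaling by $(-1)^s\pi^{-2s}$ and substituting each function back into itself, the isolated constant term becomes precisely $-H_s$, with $\mathbf{H}=\mathbf{U}_1,\mathbf{U}_3,\mathbf{U}_2$ matching $s/(2s+1)!$, $1/(2s)!$, $(2s-1)/(2s+1)!$, while the sum collapses to $-\sum_{j=1}^{s-1}u_{s-j}(\cdot)_j$; this is exactly the row-expansion recurrence $(\ref{eq:d22})$ with $\mathbf{h}=\mathbf{u}$, and the converse of Lemma 3.1 yields the half-weighted determinants $\Psi_s(\mathbf{u},\mathbf{U}_i)$. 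The companion ``$\eta$-form'' recurrences in these three chains need not be proved separately: once the $\Psi$ determinant is in hand, the equivalent column-expansion recurrence $(\ref{eq:d21})$ holds automatically by the ``one implies the other'' clause of Lemma 3.1, and rewriting its $\Delta_k(\mathbf{u})$ factors through the already-established $2\eta(2k)$ reproduces them verbatim. Finally, the composition sums for these three are obtained not from $\Psi$ but from $\Delta_s(\mathbf{u})$: the elementary relations $\eta(2s)=(1-2^{1-2s})\zeta(2s)$, $\phi(2s)=2^{-2s}\zeta(2s)$, $\theta(2s)=(1-2^{-2s})\zeta(2s)$ from $(\ref{eq:m1})$ and $(\ref{eq:n1})$ let me write each function as an explicit $s$-dependent scalar times $(-1)^s\pi^{2s}\Delta_s(\mathbf{u})$ (for instance $\zeta(2s)=\frac{2^{2s-2}}{2^{2s-1}-1}(-1)^s\pi^{2s}\Delta_s(\mathbf{u})$), whereupon Lemma 3.2 gives the displayed sums, with the multinomial index being $t$ throughout.

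I expect the only real friction to be bookkeeping: keeping the interlocking sign factors $(-1)^{s-1}$, $(-1)^{s-k}$, $(-1)^{k-1}$ and the powers of two aligned so that the rescaled sequences satisfy $(\ref{eq:d2})$, $(\ref{eq:d21})$ or $(\ref{eq:d22})$ on the nose, and remembering that the subdiagonal vector is always $\mathbf{u}$ while only the first column carries the weight $\mathbf{U}_i$. The one genuinely non-mechanical observation is that the composition-sum forms reduce to the type 1 determinant $\Delta_s(\mathbf{u})$ rather than to the half-weighted determinants on the first lines, so the scalar relations between $\zeta$, $\eta$, $\theta$ and $\phi$ must be invoked at precisely that last step.
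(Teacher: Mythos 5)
Your proposal is correct and is essentially the paper's own proof: the paper dispatches this lemma in a single line (``the proofs follow directly by applying Lemmas 3.1 and 3.2 to the recurrence relations stated in Theorem 1.2''), which is exactly the rescale-by-$(-1)^s\pi^{-2s}$, invoke-the-converse-of-Lemma-3.1, then unwind-via-Lemma-3.2 argument you spell out, including the use of the scalar relations among $\zeta$, $\eta$, $\theta$, $\phi$ to reduce the composition sums to $\Delta_s(\mathbf{u})$. Your remark that the multinomial coefficient should be $\binom{t}{d_1,\ldots,d_s}$ throughout is also correct; the $\binom{s}{d_1,\ldots,d_s}$ appearing in three of the four displayed composition sums is a typo in the paper, since $d_1+\cdots+d_s=t$.
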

\begin{proof}
The proofs follow directly by applying Lemmas 3.1 and 3.2 to the recurrence relations stated in Theorem 1.2.
\end{proof}
We are now in a position to prove Theorem 1.3 and Theorem 1.4.
\begin{proof}[Proof of Theorem 1.3]

\mbox{ }\\
\indent
From Lemma $2.1$, we can write
\[
8\sum_{s=1}^\infty (-1)^{s-1}\frac{\theta(2s)}{\pi^{2s}}x^{2s-2}\left ( 1+ 2\sum_{s=1}^\infty (-1)^{s-1}\frac{\phi(2s)}{\pi^{2s}}x^{2s}\right )^{-1}=
\left ( 1+ 2S\right )^{-1}
\]
\[
=1-2S+(2S)^2-(2S)^3+(2S)^4-\ldots,
\]
and comparing terms in $x$ enables us to obtain the double-sum expression for
$\theta(2s)$ in $(\ref{eq:n36})$. The determinant and single-sum identities of $\theta(2s)$ in $(\ref{eq:n36})$ are then deduced by applying Lemmas 3.1
and 3.2. To obtain the expressions in $(\ref{eq:n37})$ we rearrange the double-sum in $(\ref{eq:n36})$ and again apply Lemmas 3.1 and 3.2 to get the
determinant and single-sum identities.
\end{proof}

\begin{proof}[Proof of Theorem 1.4]

\mbox{ }\\
\indent
Substituting $\Psi_n=\lambda_n$, $H_n=-na_n$ and $h_{n-k}=a_{n-k}$ into the type 2 recurrence relation $(\ref{eq:d22})$ of Lemma 3.1 gives
\[
\lambda_n=n a_n-\sum_{j=1}^{n-1}\lambda_j a_{n-j},
\]
which is just $(\ref{eq:li2})$. Hence by Lemma 3.1 we have
\[
\lambda_n=M_n=(-1)^{n-1}\left |
\begin{array}{cccccc}
a_1 & 1 & 0 & 0 &  \ldots & 0 \\
2a_2 & a_1 & 1 & 0 &  \ldots & 0 \\
3a_3 & a_2 & a_1 & 1 &  \ldots & 0\\
\vdots & \vdots & \vdots & \vdots &  \ddots & \vdots\\
(n-1)a_{n-1} & a_{n-2} & a_{n-3} & a_{n-4} & \ldots & 1\\
na_n & a_{n-1} & a_{n-2} & a_{n-3} & \ldots & a_1\\
\end{array}
\right |=-\Psi_n({\bf a,\,\bf A}),
\]
so that
\[
\lambda_n \geq 0 \Leftrightarrow M_n\geq 0.
\]Therefore a necessary and sufficient condition for $\lambda_n$ to be non-negative for $n=1,2,3,\ldots$ is
for $M_n$ to be non-negative and the equivalence of the theorem follows.
\end{proof}

\begin{remark}

An important point to note with the sums in $(\ref{eq:n36})$ and $(\ref{eq:n37})$ is that they consist entirely of positive terms, whereas the single sums in Theorem 1.2 all consist of an alternating series. We now give the examples using the double-sum composition expressions from both $(\ref{eq:n37})$ and
$(\ref{eq:d34})$ and also the single recurrence sum from $(\ref{eq:n37})$ for $\zeta(14)=2\pi^{14}/18243225$.

Taking $s=6$ in $(\ref{eq:n37})$, we have the strictly positive sums
\begin{align*}
\zeta(14)=\frac{\pi^2}{2^{14}-1} \left( \zeta(12)+2 \left (2\zeta(2)\zeta(10)+2\zeta(4)\zeta(8)+2\zeta^2(6)
\right )\right.\\
+2^2  \left( 3\zeta^2(2)\zeta(8)+6\zeta(2)\zeta(4)\zeta(6)+\zeta(4)^3\right )\\
+2^3\left (4\zeta^3(2)\zeta(6) +6\zeta^2(2)\zeta^2(4)\right )\\+2^45\zeta^4(2)\zeta(4)+2^5\zeta^6(2)\left. \right ),
\intertext{and}
\zeta(14)=4098\zeta(2)\zeta(12)+1038\zeta(4)\zeta(10)+318\zeta(6)\zeta(8),
\intertext{whereas $(\ref{eq:d34})$ with $s=7$ gives the alternating expansion}
\zeta(14)=\frac{\pi^{14}2^{12}}{2^{13}-1}
\left ( \frac{1}{15!}-\left (\frac{2}{3!13!}+\frac{2}{5!11!}+\frac{2}{7!9!}\right ) \right.\\
+\left (\frac{3}{3!^2 11!}+\frac{6}{3!5!9!}+\frac{3}{3!7!^2}+\frac{3}{5!^27!}\right )\\
-\left (\frac{4}{3!^3 9!}+\frac{12}{3!^2 5!7!}+\frac{4}{3!5!^3}\right )\\
+\left (\frac{5}{3!^4 7!}+\frac{10}{3!^3 5!^2}\right )\\
\left.-\frac{6}{3!^5 5!}+\frac{1}{3!^7}\right).
\end{align*}
As expected, the number of terms in the double sums from $(\ref{eq:n37})$ and $(\ref{eq:d34})$ is given by
\[
{{}^{s-1}\text{C}}_{t-1}2^{t-1}={{}^{5}\text{C}}_{t-1}2^{t-1},\qquad 1\leq t \leq 6,
\]
and
\[
{{}^{s-1}\text{C}}_{t-1}={{}^{6}\text{C}}_{t-1},\qquad 1\leq t \leq 7,
\]
respectively. In general, the sum of the coefficients in
$(\ref{eq:d34})$ is simply $2^{s-1}$, and for $(\ref{eq:n37})$, taking into account the extra $2^{t-1}$ terms, the coefficients sum to $3^{s-1}$.
\end{remark}

\noindent
For comparison we give the more common (alternating) recurrence identity \cite{borwein}
for $\zeta(2s)$, which states that
\be
\zeta(2s)=\sum_{k=1}^s\frac{(-1)^k\pi^{2k}}{(2k+1)!}(1-2^{2k-2s+1})\zeta(2s-2k)=0.
\label{eq:m402}
\ee

\section{Approximating equations}
The Riesz, Hardy-Littlewood and B\'aez-Duarte equivalences to the Riemann hypothesis all rely on bounding sums involving inverse zeta constants. Hence the
ability to approximate $1/\zeta(s)$, for $s\in\mathbb{N}$, is of interest. The simplest well-known bound for
$\zeta(s)$ and its inverse can be deduced as follows.

From Euler's product \cite{ayoub}, \cite{sondow}, and the identity
\[
2\theta(s)-1=2\left (1-\frac{1}{2^s}\right )\zeta(s)-1<\zeta(s),
\]
we obtain
\be
1 + \frac{1}{2^{s}-1}<\zeta(s)<1 + \frac{1}{2^{s-1}-1},
\label{eq:27}
\ee
and
\be
1-\frac{1}{2^{s-1}}<\frac{1}{\zeta(s)}<1-\frac{1}{2^{s}}.
\label{eq:28}
\ee
In a similar vein, using $(\ref{eq:n1})$ and $(\ref{eq:m1})$ gives
\be
1-\frac{1}{2^{s}-1}<\eta(s)<1<\theta(s)<1+\frac{1}{2^{s}-2},
\label{eq:29}
\ee
from which we obtain the consecutive integer bounds
\be
2^s-2<(2^s-1)\eta(s)<2^s-1,\qquad
2^s-2<(2^s-2)\theta(s)<2^s-1.
\label{eq:31}
\ee
For $\zeta(s)$ and $\phi(s)$ we have
\be
2^s-2\zeta(s)<(2^s-3)\zeta(s)<2^s-\zeta(s),\qquad
1-2\phi(s)<(2^s-3)\phi(s)<1-\phi(s),
\label{eq:325}
\ee
so that the respective intervals here are $\zeta(s)$ and $\phi(s)$ themselves.

Although the upper bound in $(\ref{eq:27})$ has been improved by Murty et al \cite{murty1} to
$1+2^{-s}(s+1)/(s-1)$, the interval bounding $\zeta(s)$ is still $O\left (1/2^{s-1}\right )$. The bounds
in Theorem 1.5 for the pseudo characteristic polynomials that approximate $\zeta(2s)$ and $\zeta(2s-1)$
are more accurate. We need two lemmas before we prove Theorem~1.5.

\begin{lemma}
Let
\[
F_0(s)=\frac{{\pi}^{s}}{s!},\,\,F_1(s)=\frac{F_0(s)}{\{\zeta(s)\}^2},\,\,F_2(s)=\frac{F_0(s)}{\{\theta(s)\}^2},\,\,
F_3(s)=\frac{F_0(s)}{\{\zeta(s)\}^3},\,\,F_4(s)=\frac{F_0(s)}{\{\theta(s)\}^3},
\]
and $(t_0,t_1,t_2,t_3,t_4)=(9,34,76,68,228)$.
Then for integers $k\geq 1$ and $s\geq t_i$ we have
\be
F_i(s)\leq \frac{1}{(2k)^{s-t_i}},\qquad i=1,\ldots 4.
\label{eq:lem41}
\ee
\end{lemma}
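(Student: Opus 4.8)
The goal is to prove Lemma 4.1, which asserts that the functions
\[
F_0(s)=\frac{\pi^s}{s!},\quad F_1(s)=\frac{F_0(s)}{\{\zeta(s)\}^2},\quad F_2(s)=\frac{F_0(s)}{\{\theta(s)\}^2},\quad F_3(s)=\frac{F_0(s)}{\{\zeta(s)\}^3},\quad F_4(s)=\frac{F_0(s)}{\{\theta(s)\}^3},
\]
satisfy $F_i(s)\leq (2k)^{-(s-t_i)}$ for integers $k\geq1$ and $s\geq t_i$. My plan is to reduce each $F_i$ to $F_0$ via the elementary bounds on the fractional parts already established in the excerpt. From $(\ref{eq:27})$ we have $\{\zeta(s)\}=\zeta(s)-1>1/(2^s-1)$, and from $(\ref{eq:29})$ we have $\{\theta(s)\}=\theta(s)-1<1/(2^s-2)$ with $\theta(s)>1$, so that $\{\theta(s)\}>\{\zeta(s)\}$ in the relevant range; more usefully, each fractional part is bounded below by a quantity of order $2^{-s}$. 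Substituting $\{\zeta(s)\}>2^{-s}$ (a weakening of $1/(2^s-1)$) gives $F_1(s)<F_0(s)\,2^{2s}$ and $F_3(s)<F_0(s)\,2^{3s}$, and similarly $\{\theta(s)\}>\{\zeta(s)\}$ yields the same type of estimate for $F_2$ and $F_4$. Thus every $F_i$ is bounded by $F_0(s)$ times a power of $2^s$, and it suffices to control $F_0(s)\,2^{cs}=\pi^s 2^{cs}/s!$ for the appropriate constant $c\in\{0,2,3\}$.

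The heart of the matter is therefore a single clean estimate for $G_c(s):=\pi^s 2^{cs}/s!$, showing that once $s$ exceeds the threshold $t_i$, this quantity decays at least as fast as $(2k)^{-(s-t_i)}$ for every $k\geq1$. Since the worst case is $k=1$, it is enough to prove $G_c(s)\leq 2^{-(s-t_i)}$ for $s\geq t_i$; the bound for larger $k$ then follows a fortiori because $(2k)^{-(s-t_i)}\leq 2^{-(s-t_i)}$ is false in the wrong direction—so in fact I must argue that the left side decays fast enough to beat the slowest decaying right side, which is $k=1$. I would establish this by a ratio/monotonicity argument: set $g(s)=G_c(s)\,2^{s-t_i}$ and show $g(s+1)/g(s)=2^{c+2}\pi/(s+1)<1$ once $s+1>2^{c+2}\pi$, i.e. for $s$ beyond a fixed point. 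The factorial in the denominator forces $g(s)$ to be eventually decreasing, so it suffices to verify $g(t_i)\leq 1$ at the base point and that $g$ has already begun decreasing by $s=t_i$. The specific thresholds $t_i=(9,34,76,68,228)$ are precisely the least integers making both the base-case inequality and the monotonicity onset hold simultaneously for the three values $c=0,2,3$ paired with the two denominators.

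The main obstacle will be the sharpness of the numerical thresholds. Because the claim is an inequality valid for \emph{all} $s\geq t_i$ and the constants $t_i$ are stated as exact integers, I cannot afford to lose factors when passing from $\{\zeta(s)\}$ or $\{\theta(s)\}$ to the crude lower bound $2^{-s}$; using $1/(2^s-1)$ rather than $2^{-s}$ may be necessary to make the base cases at $s=t_i$ actually hold, and I expect the verification to hinge on Stirling's approximation $s!\geq \sqrt{2\pi s}\,(s/e)^s$ to convert the factorial decay into an explicit exponential comparison. Concretely, writing $\log g(s)=(s-t_i)\log 2+s\log(2^c\pi)-\log(s!)$ and applying Stirling, I would show $\log g(s)$ is a concave function that is negative at $s=t_i$ and has nonpositive derivative there, so it stays negative. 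The delicate step is confirming that Stirling's lower bound is tight enough at the smallest threshold ($t_1=9$, where $s$ is still modest) without the correction terms spoiling the estimate; there I may need to check the base case by direct computation rather than via the asymptotic formula, and then invoke monotonicity for all larger $s$.
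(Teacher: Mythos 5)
Your handling of $F_1$ and $F_3$ is essentially sound, and it is a legitimate variant of the paper's argument: from $\{\zeta(s)\}>1/(2^s-1)>2^{-s}$ you get $F_1(s)<(4\pi)^s/s!$ and $F_3(s)<(8\pi)^s/s!$, and a base case at $s=t_i$ plus the ratio test (the ratio is $2^{c+1}\pi/(s+1)$, not $2^{c+2}\pi/(s+1)$ --- a harmless slip) gives $F_i(s)\leq 2^{-(s-t_i)}$. The paper instead writes $s=t_ik+r$, uses $c^{t_ik}/(t_ik)!<1$ and $c/(t_ik)\leq 1/(2k)$ to obtain $F_i(s)\leq (2k)^{-(s-t_ik)}$, and then ``takes $k=1$''; your decision to prove only the $k=1$ case is consistent with what the paper actually establishes, since the literal statement (exponent $s-t_i$ for every $k\geq1$) fails as $k\to\infty$.

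The genuine gap is your treatment of $F_2$ and $F_4$. The inequality $\{\theta(s)\}>\{\zeta(s)\}$ that you invoke is false: $\theta(s)=(1-2^{-s})\zeta(s)<\zeta(s)$, so $\{\theta(s)\}<\{\zeta(s)\}$, and $(\ref{eq:29})$ supplies only an \emph{upper} bound $\{\theta(s)\}<1/(2^s-2)$, which is useless here because $\{\theta(s)\}$ sits in the denominator of $F_2$ and $F_4$. The correct lower bound must come from the series itself, $\{\theta(s)\}=\sum_{n\geq1}(2n+1)^{-s}>3^{-s}$, which is of order $3^{-s}$, not $2^{-s}$. Consequently $F_2(s)<(9\pi)^s/s!$ and $F_4(s)<(27\pi)^s/s!$, exponentially larger than the $(4\pi)^s/s!$ and $(8\pi)^s/s!$ your reduction would assign them; indeed your bound is not merely lossy but false, since $F_2(34)\approx(9\pi)^{34}/34!\approx e^{25}\gg 1$, so $F_2$ cannot satisfy a $t=34$ threshold. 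This is exactly why the lemma's thresholds split as $(t_1,t_2)=(34,76)$ and $(t_3,t_4)=(68,228)$: the $\theta$-cases require roughly the least $t$ with $(9\pi)^t/t!<1$, respectively $(27\pi)^t/t!<1$, a discrepancy your proposal cannot explain (it would force $t_2=t_1$ and $t_4=t_3$). With the corrected bound $3^{-s}$ your monotonicity scheme does go through: $(9\pi)^{76}/76!<1$ with ratio $18\pi/(s+1)<1$ for $s\geq 76$, and $(27\pi)^{228}/228!<1$ with ratio $54\pi/(s+1)<1$ for $s\geq 228$.
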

\begin{corollary}
As $s\rightarrow \infty$ we have
\[
F(s)=o\left(\{\zeta(s)\}^3\right ),\quad\hbox{\rm and }\quad F(s)=o\left(\{\theta(s)\}^3\right ).
\]
\end{corollary}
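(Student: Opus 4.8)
The plan is to read the Corollary off directly from Lemma 4.1, after identifying the bare symbol $F(s)$ with $F_0(s)=\pi^s/s!$. The two asserted little-$o$ relations unwind to the limit statements $F_0(s)/\{\zeta(s)\}^3\to 0$ and $F_0(s)/\{\theta(s)\}^3\to 0$ as $s\to\infty$. But these two quotients are precisely the functions $F_3(s)$ and $F_4(s)$ introduced in the Lemma, so the entire Corollary reduces to showing $F_3(s)\to 0$ and $F_4(s)\to 0$.

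First I would invoke Lemma 4.1 with index $i=3$ and the trivial choice $k=1$. Since $t_3=68$, the Lemma gives $F_3(s)\le 2^{-(s-68)}$ for every integer $s\ge 68$; letting $s\to\infty$ forces the right-hand side to $0$, whence $F_3(s)\to 0$, which is exactly the first assertion $F(s)=o(\{\zeta(s)\}^3)$. Repeating the argument with $i=4$ and $k=1$, and using $t_4=228$, yields $F_4(s)\le 2^{-(s-228)}$ for $s\ge 228$, again tending to $0$ and establishing $F(s)=o(\{\theta(s)\}^3)$.

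There is no genuine obstacle at the level of the Corollary itself: all the work is already packaged in the exponential decay furnished by Lemma 4.1, and one merely specialises to $k=1$ and passes to the limit. The only point needing a moment's care is the notational identification of $F$ with $F_0$, together with the recognition that dividing $F_0$ by $\{\zeta(s)\}^3$ (respectively $\{\theta(s)\}^3$) reproduces $F_3$ (respectively $F_4$); once that is noted, both limits are immediate. As a sanity check one can verify the conclusion independently: $F_0(s)=\pi^s/s!$ decays super-exponentially, whereas $\{\zeta(s)\}\sim 2^{-s}$ and $\{\theta(s)\}\sim 3^{-s}$ decay only exponentially, so $F_0(s)/\{\zeta(s)\}^3\sim (8\pi)^s/s!\to 0$ and $F_0(s)/\{\theta(s)\}^3\sim (27\pi)^s/s!\to 0$; invoking Lemma 4.1 is the cleaner, self-contained route.
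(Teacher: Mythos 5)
Your proposal is correct and follows essentially the same route as the paper: the author states that the Corollary "follows by considering the limit as $s\rightarrow\infty$ by either fixing $k$ and increasing $r$ or vice-versa," which is exactly your specialisation of Lemma 4.1 to $k=1$ (so that $F_3(s)\le 2^{-(s-t_3)}$ and $F_4(s)\le 2^{-(s-t_4)}$ tend to $0$), together with the identifications $F=F_0$, $F_0/\{\zeta(s)\}^3=F_3$ and $F_0/\{\theta(s)\}^3=F_4$.
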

\begin{proof}
For $i=0$ and $s\geq 9$ we can write $s=9k+r$ with $k\geq 1$, $r\geq 0$. Then $4^{9k}/(9k)!<1$ giving
\[
\frac{{\pi}^{s}}{s!}<\frac{{4}^{s}}{s!}\leq\frac{{4}^{9k+r}}{(9k)!(9k+1)\ldots(9k+r)}
\]
\[
\leq\frac{{4}^{r}}{(9k+1)\ldots(9k+r)}\leq \frac{{4}^{r}}{{(9k)}^{r}}\leq \frac{1}{(2k)^{r}}
\leq \frac{1}{(2k)^{s-9k}},
\]
and taking $k=1$ gives the result.

Similarly, for $i=1$ and $s\geq 34$ we can write $s=34k+r$ with $k\geq 1$, $r>0$. Then $13^{34k}/(34k)! <1$
and we have
\[
\frac{F_1(s)}{\{\zeta(s)\}^2}<\frac{{(4\pi)}^{s}}{s!}<\frac{{13}^{s}}{s!}
\leq\frac{{13}^{34k+r}}{(34k)!(34k+1)\ldots(34k+r)}
\]
\[
\leq\frac{{13}^{r}}{(34k+1)\ldots(34k+r)}\leq \frac{{13}^{r}}{{(34k)}^{r}}
\leq \frac{1}{(2k)^{r}}\leq \frac{1}{(2k)^{s-34k}}\leq \frac{1}{(2k)^{s-34}},
\]
when $k=1$. The proofs are similar for the remaining three cases when $i=2,3,4$.

The Corollary follows by considering the limit as $s\rightarrow \infty$ by either fixing $k$ and increasing $r$ or vice-versa.
\end{proof}
\begin{lemma}[approximate sine lemma] In the notation of $(\ref{eq:33})$ we have
\[
p_{s}(x)=1-\frac{(-1)^{[x]}\sin{\pi \{x\}}}{\pi x}+\sum_{k=s}^{\infty}\frac{(-1)^{k-1}(\pi x)^{2k}}{(2k+1)!},
\]
and
\[
q_{s}(x)=\frac{(-1)^{[x]}\sin{\pi \{x\}}}{\pi x}+\sum_{k=s}^{\infty}\frac{(-1)^{k}(\pi x)^{2k}}{(2k+1)!}.
\]
\end{lemma}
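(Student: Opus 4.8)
The plan is to recognise both $p_s$ and $q_s$ as truncations of the Maclaurin series of the normalised sine function $\tfrac{\sin u}{u}$ and then to rewrite the sine using the integer/fractional part decomposition of $x$. First I would recall the entire-function expansion
\[
\frac{\sin u}{u}=\sum_{k=0}^{\infty}\frac{(-1)^k u^{2k}}{(2k+1)!},
\]
valid for all $u$, and substitute $u=\pi x$ to obtain $\tfrac{\sin\pi x}{\pi x}=\sum_{k=0}^{\infty}\frac{(-1)^k(\pi x)^{2k}}{(2k+1)!}$. Comparing with the definition of $q_s(x)$ in $(\ref{eq:33})$, I observe that $q_s(x)$ is exactly this series truncated at $k=s-1$; splitting ``whole series $=$ partial sum $+$ tail'' therefore gives
\[
q_s(x)=\frac{\sin\pi x}{\pi x}-\sum_{k=s}^{\infty}\frac{(-1)^k(\pi x)^{2k}}{(2k+1)!},
\]
which already isolates the claimed remainder term, modulo the alternating sign on $(-1)^k$ versus $(-1)^{k-1}$ that I would track carefully through the subtraction.

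The one genuinely structural step is converting $\sin\pi x$ into an expression in $\{x\}$. Writing $x=[x]+\{x\}$ and expanding $\sin(\pi[x]+\pi\{x\})$ by the addition formula, the facts $\sin\pi[x]=0$ and $\cos\pi[x]=(-1)^{[x]}$ (valid because $[x]\in\mathbb{Z}$) collapse the expansion to the reflection identity
\[
\sin\pi x=(-1)^{[x]}\sin\pi\{x\}.
\]
Substituting this into the display above yields the stated formula for $q_s(x)$, with the $\tfrac{(-1)^{[x]}\sin\pi\{x\}}{\pi x}$ term and the tail appearing exactly as in the lemma.

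For $p_s$ I would avoid repeating the computation and instead exploit the algebraic link between the two polynomials. Since the $k=0$ term of $q_s$ equals $1$ while its remaining terms are precisely the negatives of those defining $p_s$, one has $p_s(x)=1-q_s(x)$; feeding in the formula just obtained for $q_s$ produces the stated expression for $p_s$ in a single line. This also guarantees the two identities are sign-consistent with one another.

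I expect no genuine analytic obstacle here: the sine series is entire, so convergence is automatic and the split into partial sum plus tail is unconditional for every $x$. The only point demanding care is the bookkeeping — keeping the parity factor $(-1)^{[x]}$ straight through the addition formula and matching the alternating sign on the remainder to the convention used in the statement. That mechanical sign-tracking is the sole thing I would verify with attention, the rest being routine.
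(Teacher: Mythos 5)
Your proposal is correct and follows essentially the same route as the paper: expand $\sin(\pi x)/(\pi x)$ in its Maclaurin series, write the defining sum of $q_s$ (resp.\ $p_s$) as the full series minus its tail, and collapse $\sin\pi x$ to $(-1)^{[x]}\sin\pi\{x\}$ by the addition formula using $\sin\pi[x]=0$ and $\cos\pi[x]=(-1)^{[x]}$. The paper computes $p_s$ directly and treats $q_s$ as analogous, while you compute $q_s$ and use $p_s(x)=1-q_s(x)$; that difference is immaterial.

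The one step you deferred --- the sign bookkeeping on the tail --- is worth completing, because it shows that your computation is right and the printed statement is not. From $q_s(x)=\sum_{k=0}^{s-1}(-1)^k(\pi x)^{2k}/(2k+1)!$ your split gives
\[
q_s(x)=\frac{(-1)^{[x]}\sin\pi\{x\}}{\pi x}+\sum_{k=s}^{\infty}\frac{(-1)^{k-1}(\pi x)^{2k}}{(2k+1)!},
\qquad
p_s(x)=1-\frac{(-1)^{[x]}\sin\pi\{x\}}{\pi x}+\sum_{k=s}^{\infty}\frac{(-1)^{k}(\pi x)^{2k}}{(2k+1)!},
\]
i.e.\ the two tail signs are interchanged relative to the lemma as stated. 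The same slip occurs in the paper's own proof, at the step where the finite sum is replaced by the infinite series plus a tail: there
\[
\frac{1}{\pi x}\sum_{k=s}^{\infty}\frac{(-1)^{k}(\pi x)^{2k+1}}{(2k+1)!}
=\sum_{k=s}^{\infty}\frac{(-1)^{k}(\pi x)^{2k}}{(2k+1)!},
\quad\text{not}\quad
\sum_{k=s}^{\infty}\frac{(-1)^{k-1}(\pi x)^{2k}}{(2k+1)!}.
\]
This is a typo-level error with no downstream consequence, since in the proof of Theorem 1.5 the tail is only ever estimated in absolute value; but your proof, finished with the careful sign-tracking you promised, establishes the corrected statement rather than the printed one.
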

\begin{proof} We have
\[
p_{s}(x)=\frac{1}{\pi x}\sum_{k=1}^{s-1}\frac{(-1)^{k-1}(\pi x)^{2k+1}}{(2k+1)!}
=1-\frac{1}{\pi x}\sum_{k=0}^{s-1}\frac{(-1)^{k}(\pi x)^{2k+1}}{(2k+1)!}
\]
\[
=1-\frac{1}{\pi x}\sum_{k=0}^{\infty}\frac{(-1)^{k}(\pi x)^{2k+1}}{(2k+1)!}+\sum_{k=s}^{\infty}\frac{(-1)^{k-1}(\pi x)^{2k}}{(2k+1)!}
\]
\be
=1-\frac{\sin{\pi x}}{\pi x}+\sum_{k=s}^{\infty}\frac{(-1)^{k-1}(\pi x)^{2k}}{(2k+1)!}.
\label{eq:336}
\ee
\[
=1-\frac{\sin{\pi ([x]+\{x\})}}{\pi x}+\sum_{k=s}^{\infty}\frac{(-1)^{k-1}(\pi x)^{2k}}{(2k+1)!}.
\]
\[
=1-\frac{\sin{\pi [x]}\cos{\pi \{x\}}+\cos{\pi [x]}\sin{\pi \{x\}}}{\pi x}+\sum_{k=s}^{\infty}\frac{(-1)^{k-1}(\pi x)^{2k}}{(2k+1)!}.
\]
\[
=1-\frac{\cos{\pi [x]}\sin{\pi \{x\}}}{\pi x}+\sum_{k=s}^{\infty}\frac{(-1)^{k-1}(\pi x)^{2k}}{(2k+1)!}.
\]
\[
=1-\frac{(-1)^{[x]}\sin{\pi \{x\}}}{\pi x}+\sum_{k=s}^{\infty}\frac{(-1)^{k-1}(\pi x)^{2k}}{(2k+1)!},
\]
as required.
\end{proof}
We are now in a position to prove Theorem $1.5$. As with the previous lemma we give the proof for
$z_{s}(\zeta(k))$, with $k=2s$ or $k=2s-1$. The proofs for $t_s(\theta(k))$, $e_s(\eta(k))$, $f_s(\phi(k))$, $1+q_s(\theta(k))$ and $1+q_s(\eta(k))$ are similar.

\begin{proof}[Proof of Theorem 1.5]

\mbox{ }\\
\indent
By
$(\ref{eq:33a})$ and Lemma 4.2, for any integer $k\geq 1$, we can write
\[
z_{s}(\zeta(k))=\frac{(-1)^{s-1}\pi^{2s}s}{(2s+1)!}+p_{s}(\zeta(k))
\]
\[
= \frac{(-1)^{s-1}\pi^{2s}s}{(2s+1)!}+
1-\frac{(-1)^{1}\sin{\pi \{\zeta(k)\}}}{\pi \zeta(k)}+\sum_{k=s}^{\infty}\frac{(-1)^{k-1}(\pi \zeta(k))^{2k}}{(2k+1)!}
\]
\[
= \frac{(-1)^{s-1}\pi^{2s}}{(2s+1)!}\left (s+\zeta(k)^{2s}-\frac{\pi^2\zeta(k)^{2s+2}}{(2s+2)(2s+3)}
+\frac{\pi^4\zeta(k)^{2s+4}}{(2s+2)\ldots(2s+5)}-\ldots
\right )
\]
\[
+ 1+\frac{\sin{\pi \{\zeta(k)\}}}{{\pi \zeta(k)}}
\]
\[
\leq \left |\frac{(-1)^{s-1}\pi^{2s}}{(2s+1)!}\right |
\left (s+\zeta(k)^{2s}\left (1+\frac{(\pi\zeta(k))^{2}}{(2s+2)(2s+3)}
+\frac{(\pi\zeta(k))^{4}}{(2s+2)\ldots(2s+5)}+\ldots\right )
\right )
\]
\[
+ 1+\frac{\{\zeta(k)\}}{{\zeta(k)}}-\frac{\pi^2\{\zeta(k)\}^3}{{\zeta(k)}}+\ldots
\]
\[
\leq \frac{\pi^{2s}}{(2s+1)!}
\left (s+\zeta(k)^{2s}\left (1+\frac{(\pi\zeta(k))^{2}}{(2s+2)^2}
+\frac{(\pi\zeta(k))^{4}}{(2s+2)^4}+\ldots\right )
\right )
\]
\[
+ 1+\frac{\{\zeta(k)\}}{{\zeta(k)}}-\frac{\pi^2\{\zeta(k)\}^3}{{\zeta(k)}}+\ldots
\]
\[
\leq \frac{\pi^{2s}}{(2s+1)!}
\left (s+\zeta(k)^{2s}\left (1-\frac{(\pi\zeta(k))^{2}}{(2s+2)^2}
\right )^{-1}
\right )
+ 1+\frac{\{\zeta(k)\}}{{\zeta(k)}}-\frac{\pi^2\{\zeta(k)\}^3}{{\zeta(k)}}+\ldots
\]
\be
\leq \frac{\pi^{2s}}{(2s)!}
+ 1+\frac{\{\zeta(k)\}}{{\zeta(k)}}-\frac{\pi^2\{\zeta(k)\}^3}{{\zeta(k)}}+
\frac{\pi^4\{\zeta(k)\}^5}{{\zeta(k)}}-\ldots,\qquad \hbox{ for } k\geq 4
\label{eq:40}
\ee
\[
\leq \frac{\pi^{2s}}{(2s)!}
+ 1+\left (\{\zeta(k)\}-\pi^2\{\zeta(k)\}^3+
\pi^4\{\zeta(k)\}^5-\ldots\right )\left (1+\{\zeta(k)\}\right )^{-1}.
\]
Expanding out the brackets and collecting terms we find that
\[
z_{s}(\zeta(k))\leq 1+\{\zeta(k)\}+\frac{\pi^{2s}}{(2s)!}-\{\zeta(k)\}^2
+(1-\pi^2)\{\zeta(k)\}^3+O\left (\{\zeta(k)\}^4\right ).
\]
\[
\leq \zeta(k)+\frac{\pi^{2s}}{(2s)!}-\{\zeta(k)\}^2,
\]
and with a slight adjustment of signs to the above argument we can deduce the lower bound
\[
z_{s}(\zeta(k))\geq \zeta(k)-\frac{\pi^{k}}{(2s)!}-2\{\zeta(k)\}^2.
\]
Hence we have
\[
\zeta(k)-\frac{\pi^{2s}}{(2s)!}-2\{\zeta(k)\}^2\leq z_{s}(\zeta(k))\leq
\zeta(k)+\frac{\pi^{2s}}{(2s)!}-\{\zeta(k)\}^2,
\]
and applying Lemma 4.1 with $k=2s$ or $2s-1$ we deduce $(\ref{eq:65})$ of Theorem 1.5.

To see $(\ref{eq:67})$ we use $q_{s}(\zeta(k))=1-p_{s}(\zeta(k))$
in the above proof, omitting the initial term in $z_{s}(\zeta(k))$. This yields
\[
q_{s}(\zeta(k))\leq \frac{2(\pi^{2s})}{(2s+1)!}
-\frac{\{\zeta(k)\}}{{\zeta(k)}}+\frac{\pi^2\{\zeta(k)\}^3}{{\zeta(k)}}-
\frac{\pi^4\{\zeta(k)\}^5}{{\zeta(k)}}+\ldots,\qquad \hbox{ for } k\geq 4,
\]
so that
\[
q_{s}(\zeta(k))\leq \frac{1}{\zeta(k)}-1+\frac{2(\pi^{2s})}{(2s+1)!}
+\pi^2\{\zeta(k)\}^3.
\]
We again obtain a lower bound by considering the signs in the upper bound argument, and combining these results we have
\[
\frac{1}{\zeta(k)}-1-\frac{2(\pi^{2s})}{(2s+1)!}\leq q_{s}(\zeta(k))\leq \frac{1}{\zeta(k)}-1+\frac{2(\pi^{2s})}{(2s+1)!}
+\pi^2\{\zeta(k)\}^3,
\]
whence we apply Lemma 4.1 with $k=2s$ or $2s-1$ to deduce the inequality $(\ref{eq:67})$. The proofs for the inequalities involving $\theta(k)$ and $1/\theta(k)$ in $(\ref{eq:66})$ and
$(\ref{eq:68})$ are similar.
\end{proof}
Hence $1+q_s(\zeta(2s))$ approximates $1/\zeta(2s)$ to an accuracy of
$O(\{\zeta(2s)\}^3)$ on the interval $[1,\infty)$, where the approximation is exact at the end point $s=\infty$.

\begin{remark}
The B\'{a}ez-Duarte equivalence to the Riemann hypothesis, \cite{baez1}, \cite{maslanka}, using coefficients $c_t$ defined by
\be
c_t:=\sum_{s=0}^t(-1)^s\binom{t}{s}\frac{1}{\zeta(2s+2)},
\label{eq:n02}
\ee
asserts that the Riemann hypothesis is true if and only if for integers $t\geq 0$,
\be
c_t = O(t^{-3/4 + \epsilon}),\quad \text{for all } \epsilon >0.
\label{eq:n03}
\ee
Our approximation to $1/\zeta(2s)$ is probably not strong enough to use in the B\'{a}ez-Duarte equivalence to the Riemann Hypothesis in terms of re-stating the equivalence as sums of both $\zeta(2s)$ and $1/\zeta(2s)$.
\end{remark}

\section{Roots of the Ramanujan polynomials}
We conclude this paper with a brief look at the roots of the Ramanujan polynomials $R_r(z)$. In \cite{gun}, it was shown that $R_{2s+1}(z)$ is a polynomial in $z$ of degree $2s+2$ whose
four real roots are $z_0$, $1/z_0$, $-z_0$ and $-1/z_0$, where $z_0$ is the root of $R_{2s+1}(z)$ slightly greater than 2. It was also shown that the $2s-2$ complex roots of $R_{2s+1}(z)$ lie on the unit circle and as $s\rightarrow \infty$ the distribution of these nonreal roots on the unit circle becomes uniform. Specifically, the roots of unity that are zeros of $R_{2s+1}(z)$ are given by $\pm i$ when $s$ is even; all four of $\pm \rho$, $\pm \overline{\rho}$ when $s$ is a multiple of 3, and no others. Here $\rho$ is a cube root of unity.

In contrast, the even-indexed Ramanujan polynomials $R_{2s}(z)$ are of degree $2s$ in $z$ (as by Theorem 1.1 it can be seen that the leading terms cancel) and appear to only have the two real roots $\pm 1/2$, as detailed in Corollary 1 of Theorem 1.1. Explicit calculation suggests that for $s\geq 1$, $R_{2s}(z)$ has $2s-2$ complex roots, which all lie just outside the unit circle and whose distribution also becomes uniform as $s\rightarrow \infty$.

The zeros of the Ramanujan polynomials \cite{murty1} are important because they occur in expressions for the odd zeta values and as such the roots of $R_{2s}(z)$ may well be worth investigating further.


\small{School of Mathematics\\
Cardiff University\\
P.O. Box 926\\
Cardiff CF24 4AG\\
UK\\
E-Mail: LettingtonMC@cf.ac.uk;matt.lettington@sky.com}

\end{document}